\numberwithin{equation}{section}
\theoremstyle{plain}
\newtheorem{thm}[equation]{Theorem}
\newtheorem{prop}[equation]{Proposition}
\newtheorem{lem}[equation]{Lemma} 
\newtheorem{cor}[equation]{Corollary}
\newtheorem*{cor*}{Corollary}
\newtheorem*{prob*}{Problem}
\newtheorem*{thm*}{Theorem}
\newtheorem*{thma*}{Theorem A}
\newtheorem*{thmb*}{Theorem B}
\newtheorem*{thmc*}{Theorem C}
\newtheorem*{conj*}{Conjecture}
\theoremstyle{definition}
\newtheorem{defn}[equation]{Definition}
\theoremstyle{remark}
\newtheorem{rmk}[equation]{Remark}
\newtheorem{claim}[equation]{Claim}
\newenvironment{enumalph}
{\begin{enumerate}}
{\end{enumerate}}
\newenvironment{enumroman}
{\begin{enumerate}}
{\end{enumerate}}
\DeclareMathOperator{\adim}{adim}
\DeclareMathOperator{\Gal}{Gal}
\DeclareMathOperator{\M}{M}
\DeclareMathOperator{\ord}{ord}
\DeclareMathOperator{\PSL}{PSL}
\DeclareMathOperator{\SL}{SL}
\DeclareMathOperator{\Tr}{Tr}
\DeclareMathOperator{\NDP}{NDP}
\newcommand{\HH}{\mathbb H}
\newcommand{\Q}{\mathbb Q}
\newcommand{\R}{\mathbb R}
\newcommand{\Z}{\mathbb Z}
\newcommand{\Belyi}{Bely\u{\i}}
\newcommand{\calH}{\mathcal{H}}
\newcommand{\calT}{\mathcal{T}}
\newcommand{\quat}[2]{\displaystyle{\biggl(\frac{#1}{#2}\biggr)}}
\newcommand{\la}{\langle}
\newcommand{\lcm}{\operatorname{lcm}}
\newcommand{\defi}{\textsf}
\def\phi{\varphi}
\def\rho{\varrho}
\begin{document}

\title{On the arithmetic dimension of triangle groups} % AKA Trianglez!

\author{Steve Nugent}
\address{Lady Margaret Hall College, Norham Gardens, Oxford OX2 6QA, United Kingdom}
\email{steve.nugent@me.com}

\author{John Voight}
\address{Department of Mathematics, Dartmouth College, 6188 Kemeny Hall,
Hanover, NH 03755, USA}
\email{jvoight@gmail.com}
\date{\today}

\begin{abstract}
Let $\Delta=\Delta(a,b,c)$ be a hyperbolic triangle group, a Fuchsian group obtained from reflections in the sides of a triangle with angles $\pi/a,\pi/b,\pi/c$ drawn on the hyperbolic plane.  We define the arithmetic dimension of $\Delta$ to be the number of split real places of the quaternion algebra generated by $\Delta$ over its (totally real) invariant trace field.  Takeuchi has determined explicitly all triples $(a,b,c)$ with arithmetic dimension $1$, corresponding to the arithmetic triangle groups.  We show more generally that the number of triples with fixed arithmetic dimension is finite, and we present an efficient algorithm to completely enumerate the list of triples of bounded arithmetic dimension.
\end{abstract}

\maketitle
% \tableofcontents

Classically, tessellations of the sphere, the Euclidean plane, and the hyperbolic plane by triangles  \cite{FrickeKlein,Magnus} were a source of significant interest.  Let $a,b,c \in \Z_{\geq 2} \cup \{\infty\}$, and let 
\[ \chi(a,b,c)=\left(\frac{1}{a}+\frac{1}{b}+\frac{1}{c}\right)-1. \]
Let $T=T(a,b,c)$ be a triangle with angles $\pi/a,\pi/b,\pi/c$ (where $\pi/\infty=0$); without loss of generality, we may assume $a \leq b \leq c$.  Then $T$ can be drawn in a geometry according to the excess $\chi(a,b,c)\pi$ of the sum of its angles: on the sphere if $\chi(a,b,c)>0$, the Euclidean plane if $\chi(a,b,c)=0$, and the hyperbolic plane if $\chi(a,b,c)<0$, and we call the triple $(a,b,c)$ \defi{spherical}, \defi{Euclidean}, or \defi{hyperbolic}, accordingly.  The triangle $T$ is unique up to similarity if it is Euclidean, and otherwise it is unique up to isometry, and $T$ then provides a tessellation.  The spherical and Euclidean triples have been understood since classical antiquity---giving rise to Platonic solids and familiar tessellations of the Euclidean plane---so we suppose now that the triple $(a,b,c)$ is hyperbolic, and $\chi(a,b,c)<0$.  For example, the tessellation for $(a,b,c)=(2,3,9)$ is as follows:
\begin{center}
\includegraphics{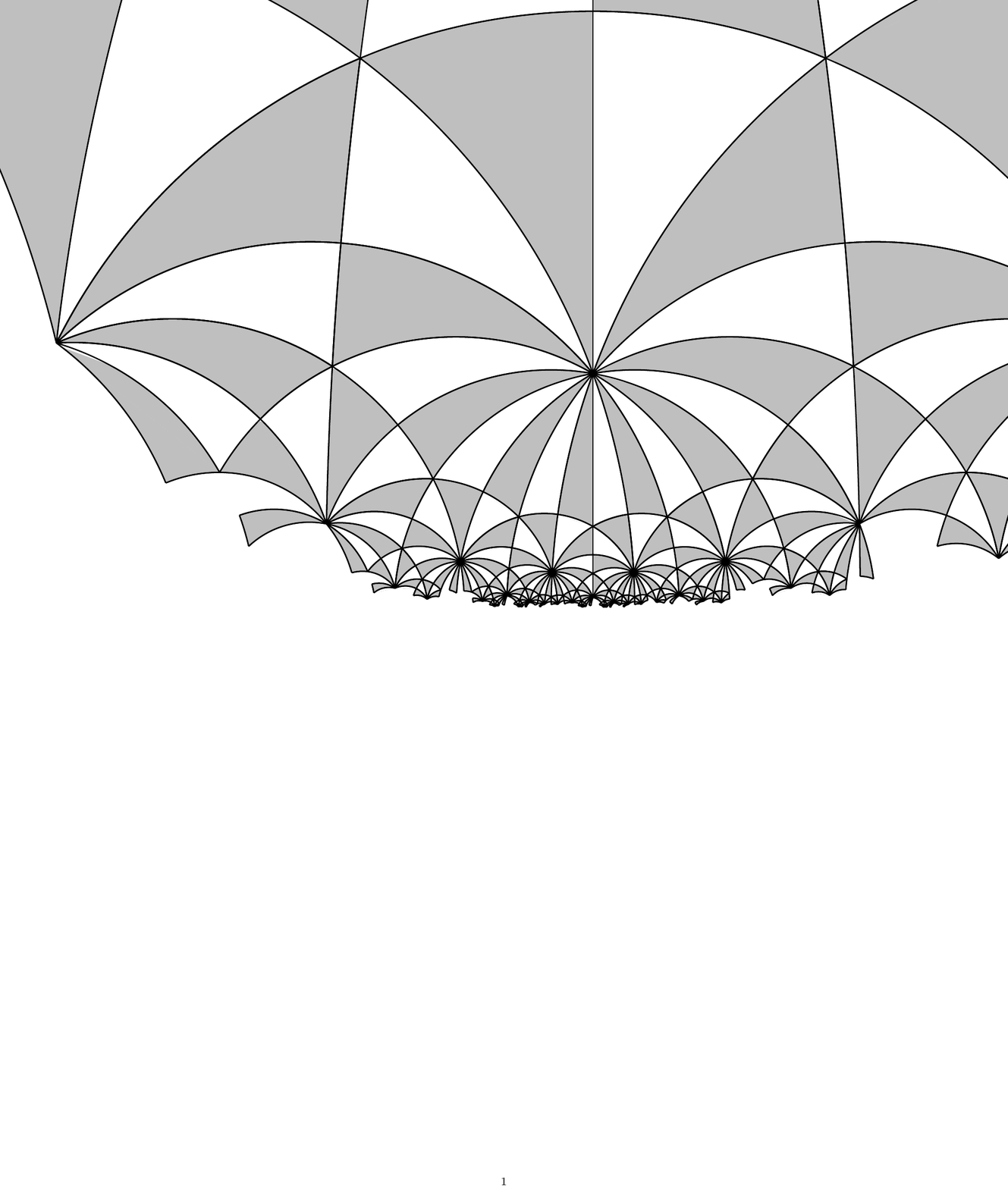}
\end{center}

In this article, we study \emph{arithmetic} properties of triangular tessellations of the hyperbolic plane, and in particular we explore the following question: what happens when we multiply the angles of the corresponding triangle by an integer factor, corresponding to the Galois action on its angles?  

Let $m=\lcm(\{a,b,c\} \smallsetminus \{\infty\})$, and for $\theta \in \R$, let $\angle(\theta) \in [0,\pi/2]$ satisfy $\cos(\angle(\theta))=\lvert\cos(\theta)\rvert$.  For $k \in (\Z/2m\Z)^\times$, we define the $k$th \defi{conjugate} triangle of $T$ to be the triangle with angles $\angle(k\pi/a),\angle(k\pi/b),\angle(k\pi/c)$.  Equivalently, we may identify an acute triangle with the cosines of its angles, and so the conjugate triangle corresponds to the conjugates of the triple of angles $(\cos \pi/a, \cos \pi/b, \cos \pi/c)$  under $\Gal(\Q(\zeta_{2m})/\Q) \simeq (\Z/2m\Z)^\times$.  Having ``swollen''  (or ``dilated'') the hyperbolic triangle $T$, the resulting conjugate triangles could potentially be drawn on any one of the three geometries, but it turns out they are never Euclidean.  In general, such a swollen triangle will no longer tesselate a plane, since fitting a whole number of these triangles will in general require an integer multiple of $2\pi$ radians; instead, these triangles fit around a given point in what could perhaps best be thought of as a $k$-layered corkscrew, either hyperbolic or spherical in kind.  In order to understand this configuration better, we are led to ask: given a hyperbolic triangle $T(a,b,c)$, how many conjugate triangles does it have up to isometry, how many are hyperbolic (versus spherical), and how can these numbers be efficiently computed?  

This naive ``swelling'' procedure can be reformulated in terms of the symmetry group of the tessellation as follows.  The reflections in the edges of $T$ generate a discrete group of isometries of the hyperbolic plane, and its orientation-preserving subgroup is a \defi{triangle group} $\Delta=\Delta(a,b,c)$.  Takeuchi \cite{Takeuchi} showed that the subalgebra $A \subseteq \M_2(\R)$ generated by $\Delta$ over its totally real invariant trace field $E$ is a quaternion algebra over $E$ (for precise definitions, see section 2).  Let $r$ be the number of split real places of $A$.  Then there is an embedding $\Delta \hookrightarrow \PSL_2(\R)^r$ as a discrete subgroup, and so $\Delta$ acts properly by orientation-preserving isometries on $\calH^r$, where $\calH$ is the hyperbolic plane.  The quotient $Y(\Delta)=\Delta \backslash \calH^r$ is a complex orbifold of dimension $r$.  Accordingly, we define the \defi{arithmetic dimension} $\adim(a,b,c)=\adim(\Delta(a,b,c))$ to be the number of split real places of $A$, and if $\adim(a,b,c)=r$, we say that $(a,b,c)$ is \defi{$r$-arithmetic}.  

The preceding definition is the natural way to make the above intuition rigorous, as it keeps track of the action of ``swelling'' on the symmetries of the tessellation---and, at the same time, it is the right notion for arithmetic applications.  An $r$-arithmetic triangle group yields an arithmetic lattice $\Delta \hookrightarrow \PSL_2(\R)^r$, and such lattices are central objects of study in many areas of mathematics.  The quotient $Y(\Delta)$ is the complex points of a \emph{quaternionic Shimura variety}, studied in detail by Cohen--Wolfart \cite{CohenWolfart}.  This area of research has seen significant renewed interest \cite{ClarkVoight,Kucharczyk,SchallerWolfart} in part because of its connection with the theory of \Belyi\ maps.  For these reasons, it is natural to seek out those triples with small arithmetic dimension $r$ in order to carry out further explicit investigations.  For example, a $2$-arithmetic triangle group yields a quaternionic Shimura surface \cite{Lai} equipped with a non-special rational curve lying on it, a configuration whose arithmetic geometry merits further study.  

Takeuchi \cite{Takeuchi,Takeuchi2} has determined all triples $(a,b,c)$ with $\adim(a,b,c)=1$; the corresponding triangle groups $\Delta(a,b,c)$ are then called \defi{arithmetic}.  Takeuchi finds precisely $85$ such triples, and they fall into $19$ commensurability classes.  
This finiteness result makes intuitive sense: if $a,b,c \in \Z_{\geq 2}$ are all large (with correspondingly small angles), we should be able to find a small enough swelling factor $k$ that keeps the triangle hyperbolic.  While Takeuchi follows this principle, his proof is not as simple, and carrying out the complete computation is a nontrivial task.  

The main result of our paper is to streamline Takeuchi's proof and to generalize his result to arbitrary arithmetic dimension, as follows.  For $r \in \Z_{\geq 1}$, let
\[ \calT(r)=\{(a,b,c) : \text{$a,b,c \in \Z_{\geq 2} \cup \{\infty\}$ with $\adim(a,b,c) = r$} \} \]
be the set of $r$-arithmetic triples.  

\begin{thm*}
The set $\calT(r)$ is finite.  Moreover, there exists an explicit algorithm that takes as input an integer $r \geq 1$ and produces as output the set $\calT(r)$ using $O(r^{14}\log^{21} r)$ bit operations.
\end{thm*}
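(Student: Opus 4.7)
The plan is threefold: (i) express $\adim(a,b,c)$ as a concrete count of ``hyperbolic'' Galois conjugates; (ii) establish a bound $\max(a,b,c)\leq B(r)$ polynomial in $r$, via a density estimate on spherical conjugates; (iii) enumerate bounded triples and tally $\adim$, tracking bit-complexity.

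\emph{Step (i): Reformulation.} Set $m = \lcm(\{a,b,c\}\smallsetminus\{\infty\})$. Takeuchi's analysis places the invariant trace field $E$ inside $\Q(\zeta_{2m})^+$; its real embeddings are indexed by the orbits of $(\Z/2m\Z)^\times/\{\pm1\}$ under $\Gal(\Q(\zeta_{2m})^+/E)$, where $k$ acts via $\zeta_{2m}\mapsto\zeta_{2m}^k$. At the real place corresponding to $k$, the quaternion algebra $A$ splits iff the $k$-th conjugate triangle is hyperbolic, i.e.,
\[
\angle(k\pi/a)+\angle(k\pi/b)+\angle(k\pi/c) < \pi.
\]
Thus $\adim(a,b,c)$ equals the number of such hyperbolic orbits; the spherical orbits account for $[E:\Q]-r$ ramified real places of $A$.

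\emph{Step (ii): Bounding $\max(a,b,c)$.} Since $\angle$ has mean $\pi/4$ on $\R/\pi\Z$, one expects the average of the three angles to be $3\pi/4<\pi$. Applying the Erd\H{o}s--Tur\'an--Koksma discrepancy inequality to the triple $(k/a,k/b,k/c)\in(\R/\Z)^3$ with $k\in(\Z/2m\Z)^\times$—and invoking $\phi(2m)/(2m)\gg 1/\log\log m$ to absorb the restriction to units into the Fourier estimates via Ramanujan sums—yields an effective bound: the number of spherical $k$ is $O(\phi(2m)/\min(a,b,c))$, and hence the number of spherical orbits is $O\bigl([E:\Q]/\min(a,b,c)\bigr)$. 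Since $E$ contains $\Q(\cos 2\pi/a), \Q(\cos 2\pi/b), \Q(\cos 2\pi/c)$, we have $[E:\Q]\geq \max(\phi(a),\phi(b),\phi(c))/2$. Combining,
\[
r = \adim(a,b,c) \geq [E:\Q]\bigl(1 - O(1/\min(a,b,c))\bigr),
\]
which forces $\min(a,b,c)$ bounded and $[E:\Q]\leq 2r$ (say) once $\min(a,b,c)$ exceeds an explicit constant. Using $\phi(n)\gg n/\log\log n$, this gives $\max(a,b,c)\leq B(r)$ for an explicit $B(r)=\mathrm{poly}(r)$; the cases $a=\infty$ or $b=\infty$ reduce to an analogous one- or two-parameter estimate.

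\emph{Step (iii): Enumeration and complexity.} Loop over all triples with $2\leq a\leq b\leq c\leq B(r)$ (including $c=\infty$). For each, compute $m$, enumerate $(\Z/2m\Z)^\times/\{\pm1\}$, and decide the hyperbolicity inequality by evaluating the three cosines to $O(\log m)$ bits of precision, which suffices because $|\sum\angle(k\pi/\cdot)-\pi|$ admits an algebraic-height lower bound of the form $m^{-O(1)}$. Tally the orbits under the Galois quotient to obtain $\adim(a,b,c)$, and output those triples with value $r$. There are $O(B(r)^3)$ triples, $O(m)$ residues per triple with $m\leq B(r)^3$, and $\tilde O(1)$ bit operations per arithmetic step; tracking the polynomial exponents through $B(r)$ yields the stated $O(r^{14}\log^{21}r)$.

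The main obstacle is the \emph{effective} discrepancy estimate in Step (ii): one needs a quantitative (not merely asymptotic) bound on the proportion of spherical $k$, uniform in $a,b,c$, while $k$ is restricted to the Dirichlet set $(\Z/2m\Z)^\times$ whose associated exponential sums require more care than the classical Weyl setting. A secondary obstacle is matching the claimed exponents $14$ and $21$, which demands sharp bookkeeping through $B(r)$, the index $[\Q(\zeta_{2m})^+:E]$, and the precision needed to certify the gap $|\sum\angle-\pi|$; once this is in place, Steps (i) and (iii) are routine.
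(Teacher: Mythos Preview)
Your Step~(ii) contains a false assertion that breaks the argument. You claim that the number of spherical $k$ is $O(\phi(2m)/\min(a,b,c))$, so that the spherical proportion tends to $0$ as $\min(a,b,c)\to\infty$. But the region $\{(x,y,z)\in[0,\pi/2]^3:x+y+z>\pi\}$ occupies $1/6$ of the cube, so when $(k/a,k/b,k/c)$ equidistributes the spherical proportion tends to $1/6$, not $0$; for $a=b=c=n$ the three angles coincide and the spherical proportion tends to $1/3$. A discrepancy estimate can at best give ``proportion $=1/6+O(\text{discrepancy})$'', never $O(1/\min(a,b,c))$. One might try to salvage $[E:\Q]\leq Cr$ from a uniform positive lower bound on the \emph{hyperbolic} proportion, but your argument only applies once all of $a,b,c$ are large enough for equidistribution to take hold, and it says nothing when $\min(a,b,c)$ is small. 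That is precisely the crux: for $(2,3,c)$ one has $\cos(k\pi/2)=0$ and $\cos(k\pi/3)=\pm\tfrac12$, so the question degenerates to a one-variable problem in $k\bmod c$ and your three-dimensional discrepancy framework is irrelevant. The tables in the paper show that the bulk of $r$-arithmetic triples have $a\in\{2,3\}$, so this case cannot be set aside.

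The paper avoids equidistribution entirely. First (Lemma~\ref{lem:range}), the sign of $\kappa(a,b,c;k)$ is decided by the \emph{rational} inequality
\[
c\,\lvert k_ab+k_ba-ab\rvert \;<\; k_c\,ab \;<\; c\bigl(ab-\lvert k_ab-k_ba\rvert\bigr),
\]
so no cosine approximation or height lower bound is needed in Step~(iii). Second, taking for $k$ the primes $p_1<\dots<p_r\leq q$ not dividing $2m$ (with $q=\NDP(a,b,c;r)$ the $r$th such prime), one reads off from these inequalities the cascading bounds $a<3q$, $b<6q^2$, $c<18q^4$ (Proposition~\ref{prop6}); it is exactly this cascade that handles small $a$. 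Finally $q<\max(251,5r\log r)$ by an elementary primorial estimate (Lemma~\ref{p6lem}), and the exponents $14=2\cdot 7$ and $21=3\cdot 7$ drop out of $abc=O(q^7)$ together with $m\leq abc$. Your hand-wave ``tracking the polynomial exponents through $B(r)$'' cannot reproduce these numbers without the nondividing-prime mechanism.
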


In particular, we explicitly determine in this paper the group $H \subseteq (\Z/2m\Z)^\times$ giving triangles isometric to $T$ (i.e.\ acting trivially, Theorem \ref{thm:mult}) and we note that
\[ \adim(\Delta)=\#\left\{k \in (\Z/2m\Z)^\times/H : \kappa(a,b,c;k)<0 \right\} \]
where  
\[ 1-\kappa(a,b,c;k) = \cos^2\frac{k\pi}{a} + \cos^2\frac{k\pi}{b} + \cos^2\frac{k\pi}{c} + 2\cos\frac{k\pi}{a}\cos\frac{k\pi}{b}\cos\frac{k\pi}{c}. \] 
We provide a simple formula (Lemma \ref{lem:range}) that detects the sign of $\kappa(a,b,c;k)$ using exact arithmetic.  The proof of our theorem combines bits from algebra, Galois theory, number theory, as well as some analytic estimates.  In addition to the main theorem, in section 5 we also exhibit algorithms that perform very well in practice.  The list of triples in $\calT(r)$ for $r \leq 5$ is provided in section 6, along with the cardinalities $\#\calT(r)$ for $r \leq 15$.  

The authors would like to thank Robert Kucharczyk, Carl Pomerance, Dan Rockmore, and the anonymous referee for their helpful comments and suggestions, as well as Kayla Horak for her initial contributions to this project.  The second author was supported by an NSF CAREER Award (DMS-1151047).

\section{Background and notation}

In this section, we set up some basic background and notation on triangle groups and quaternion algebras.  For further reference, see Takeuchi \cite{Takeuchi0} and Clark--Voight \cite[\S 5]{ClarkVoight}, and the references therein. 

Let $a,b,c \in \Z_{\geq 2} \cup \{\infty\}$.  Without loss of generality, we may assume that $a \leq b \leq c$.  Suppose further that
\[ \chi(a,b,c)=\frac{1}{a}+\frac{1}{b}+\frac{1}{c}-1<0. \]
In particular, we have $b,c \geq 3$.  Then there is a triangle $T=T(a,b,c)$ with angles $\pi/a,\pi/b,\pi/c$ in the hyperbolic plane, unique up to isometry.  The reflections in the sides of this triangle generate a discrete group; its orientation-preserving subgroup is the \defi{triangle group} $\Delta(a,b,c) \leq \PSL_2(\R)$, a Fuchsian group with presentation
\begin{equation} 
\Delta=\Delta(a,b,c)=\langle \delta_a, \delta_b, \delta_c \mid \delta_a^a=\delta_b^b=\delta_c^c=\delta_a\delta_b\delta_c=1 \rangle; 
\end{equation}
when one of $s=a,b,c$ has $s=\infty$, we interpret $\delta_s^s=1$ to be a trivial relation, so for example, 
\[ \Delta(\infty,\infty,\infty) = \la \delta_a, \delta_b, \delta_c \mid \delta_a\delta_b\delta_c = 1 \rangle \]
is isomorphic to the free group on two generators.

For $s \in \Z_{\geq 2}$, let $\zeta_s=\exp(2\pi i/s)$ and $\lambda_s=2\cos(2\pi/s)=\zeta_s+\zeta_s^{-1}$, and by convention let $\zeta_\infty=1$ and $\lambda_\infty=2$.  By the half-angle formula, $\lambda_{2a}^2=\lambda_a+2$.  Let $\Delta^{(2)} \leq \Delta$ be the subgroup generated by $-1$ and $\delta^2$ for $\delta \in \Delta$.  Then $\Delta^{(2)} \trianglelefteq \Delta$ is a normal subgroup with quotient $\Delta/\Delta^{(2)}$ an elementary abelian $2$-group.  

We define the \defi{invariant trace field} of $\Delta(a,b,c)$ to be the totally real number field
\begin{equation} 
E=\Q(\Tr \Delta^{(2)})=\{\Tr(\delta^2) : \delta \in \Delta\})=\Q(\lambda_{a},\lambda_{b},\lambda_{c},\lambda_{2a}\lambda_{2b}\lambda_{2c})
\end{equation}
(well-defined as $\Tr(\delta^2)$ is independent of the lift of $\delta \in \Delta$ to $\SL_2(\R)$).  Takeuchi \cite{Takeuchi} showed that the subalgebra $A \subseteq \M_2(\R)$ generated over $E$ by the preimage of $\Delta$ in $\SL_2(\R)$ is a quaternion algebra over $E$, and indeed
\begin{equation} \label{eqn:Aquat}
A \simeq \quat{\lambda_b^2-4, (\lambda_b+2)(\lambda_c+2)\beta}{E} 
\end{equation}
where
\begin{equation} \label{eqn:beta}
\beta = \lambda_a + \lambda_b + \lambda_c + \lambda_{2a}\lambda_{2b}\lambda_{2c} + 2 = \lambda_{2a}^2+\lambda_{2b}^2+\lambda_{2c}^2+\lambda_{2a}\lambda_{2b}\lambda_{2c}-4 \neq 0.
\end{equation}

Let $m=\lcm(\{a,b,c\} - \{\infty\})$.  Then the invariant trace field $E$ sits in the following field diagram:
\begin{equation} \label{eqn:fielddiagram}
\begin{aligned} 
\xymatrix{
K = \Q(\zeta_{2a},\zeta_{2b},\zeta_{2c}) \ar@{-}^{H_2}[dr] \ar@{-}@/_4pc/_{G \simeq (\Z/2m\Z)^\times}[ddddr] 
\ar@{-}@/_1pc/_{H_1}[dddr]
\ar@{-}@/^3pc/^H[ddrr]\\
&F_2 = \Q(\lambda_{2a},\lambda_{2b},\lambda_{2c}) \ar@{-}[dr] \ar@{-}[dd] \\
&&E = \Q(\lambda_{a},\lambda_{b},\lambda_{c},\lambda_{2a}\lambda_{2b}\lambda_{2c}) \ar@{-}[dl] \\
& F_1 = \Q(\lambda_{a},\lambda_{b},\lambda_{c}) \ar@{-}[d] \\
&\Q
} 
\end{aligned}
\end{equation}
We identify 
\begin{align*}
(\Z/2m\Z)^\times &\xrightarrow{\sim} G=\Gal(K/\Q) \\
k &\mapsto \sigma_k \qquad \text{where $\sigma_k(\zeta_{2m})=\zeta_{2m}^k$}.
\end{align*}  
Accordingly, the real (infinite) places of $E$ are indexed by classes $k \in (\Z/2m\Z)^\times/H$, with
\[ \sigma_k(\lambda_{s})=\sigma_k(2\cos(2\pi/s)) = \sigma_k(\zeta_{s}+\zeta_{s}^{-1})=\zeta_s^k+\zeta_s^{-k}=2\cos(2k\pi/s) \]
for $s=a,b,c$.  At an embedding $E \hookrightarrow \R$, corresponding to the class of $\sigma_k$ in $G/H$, we have either $\sigma_k(\beta)>0$ or $\sigma_k(\beta)<0$, and accordingly we have $A \hookrightarrow \M_2(\R)$ (A is \defi{split}) or $A \hookrightarrow \HH$ (A is \defi{ramified}) where $\HH$ is the division ring of real Hamiltonians. 

Let $\Z_E$ be the ring of integers of $E$ and let $\Lambda \subseteq A$ be the $\Z_E$-order of $A$ generated by $\Delta^{(2)}$.  Then \cite[Proposition 5.11]{ClarkVoight} there is an embedding $\Delta \hookrightarrow N_A(\Lambda)/E^\times$, where $N_A(\Lambda)$ is the normalizer of $\Lambda$ in $A$.  

\begin{defn}
The \defi{arithmetic dimension} $\adim(a,b,c)$ of the triple $(a,b,c)$ is the number of split real (infinite) places of $A$.  If $\adim(a,b,c)=r$, we say that $(a,b,c)$ is \defi{$r$-arithmetic}.
\end{defn}

Takeuchi calls 1-arithmetic triples \defi{arithmetic}~\cite{Takeuchi}.  

Let $r=\adim(a,b,c)$.  Then by definition we have an embedding $A \hookrightarrow A \otimes_{\Q} \R \simeq \M_2(\R)^r \times \HH^{[E:\Q]-r}$ where $\HH \simeq \quat{-1,-1}{\R}$ is the division ring of Hamiltonians over $\R$.  Projecting onto the factor $\M_2(\R)^r$ we obtain an embedding $\Delta \hookrightarrow \PSL_2(\R)^r$ as a discrete subgroup and $\Delta$ acts properly on $\calH^r$ by orientation-preserving isometries, where $\calH$ is the hyperbolic plane.  The quotient $Y(\Delta)=\Delta \backslash \calH^r$ has the structure of a complex orbifold of dimension $r$; it is compact if and only if $\infty \not\in \{a,b,c\}$.  In this way, the study of $r$-arithmetic triangle groups is the study of certain natural arithmetic lattices in $\PSL_2(\R)^r$.

The arithmetic dimension is given in Galois-theoretic terms as follows.  For $k \in (\Z/2m\Z)^\times$ we define the \defi{curvature}
\begin{equation} \label{eqn:kappa_curvature}
\kappa(a,b,c;k) = 1 - \left(\cos^2\frac{k\pi}{a} + \cos^2\frac{k\pi}{b} + \cos^2\frac{k\pi}{c} + 2\cos\frac{k\pi}{a}\cos\frac{k\pi}{b}\cos\frac{k\pi}{c}\right) 
\end{equation}
so that from \eqref{eqn:beta} we have
\begin{equation} \label{eqn:sigmakappa}
\sigma_k(\beta)=-4\kappa(a,b,c;k). 
\end{equation}

\begin{lem} \label{lem:adimkappa}
We have
\[ \adim(a,b,c)=\{ k \in (\Z/2m\Z)^\times/H : \sigma_k(\beta)>0\}=\{ k \in (\Z/2m\Z)^\times/H : \kappa(a,b,c;k)<0\}. \]
\end{lem}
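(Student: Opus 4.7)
The plan is to unwind the definition of $\adim(a,b,c)$ in three steps: first identify the real places of $E$ with $(\Z/2m\Z)^\times/H$; then translate the splitting condition at each such place into a sign condition on $\sigma_k(\beta)$ via the Hilbert symbol presentation~\eqref{eqn:Aquat}; and finally pass to the curvature via~\eqref{eqn:sigmakappa}. The first step is immediate from the field diagram~\eqref{eqn:fielddiagram}: the coset of $k \in (\Z/2m\Z)^\times$ modulo $H$ corresponds to the embedding $\sigma_k|_E$, which is real because $E$ is totally real, so by definition
\[
\adim(a,b,c) = \#\{k \in (\Z/2m\Z)^\times/H : A \otimes_{E,\sigma_k} \R \simeq \M_2(\R)\}.
\]

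For the second step, I use the fact that a real quaternion algebra $\left(\frac{u,v}{\R}\right)$ is isomorphic to $\M_2(\R)$ precisely when at least one of $u,v$ is positive. Applying this to~\eqref{eqn:Aquat} at the place $\sigma_k$, I examine the three factors $\lambda_b^2-4$, $\lambda_b+2$, and $\lambda_c+2$. Since $b \geq 3$ and $b \mid 2m$, the coprimality $\gcd(k,2m)=1$ forces $\gcd(k,b)=1$, and the half-angle identity then gives
\[
\sigma_k(\lambda_b^2 - 4) = -4\sin^2(k\pi/b) < 0 \quad\text{and}\quad \sigma_k(\lambda_b+2) = 4\cos^2(k\pi/b) > 0,
\]
with the analogous strict inequalities for $\lambda_c+2$. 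Hence the first entry of the Hilbert symbol is strictly negative at $\sigma_k$ while $(\lambda_b+2)(\lambda_c+2)$ is strictly positive there, so the sign of the second entry agrees with that of $\sigma_k(\beta)$. It follows that $A$ splits at $\sigma_k$ if and only if $\sigma_k(\beta) > 0$, which yields the first equality. The second equality is then immediate from~\eqref{eqn:sigmakappa}, since $\sigma_k(\beta) = -4\kappa(a,b,c;k)$.

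The only real subtlety is the sign analysis of the three Hilbert symbol factors: coprimality of $k$ with $2m$ is exactly what rules out $\lambda_b+2$ or $\lambda_c+2$ vanishing or changing sign at some conjugate embedding, and the same coprimality prevents $\lambda_b^2-4$ from vanishing. In the degenerate case where one of $b,c$ equals $\infty$, the presentation~\eqref{eqn:Aquat} must be replaced by an appropriate limiting Hilbert form, but the same sign analysis then applies.
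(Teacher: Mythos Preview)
Your proof is correct and follows essentially the same approach as the paper's: both identify the real places via the field diagram, then analyze the signs of the entries in the Hilbert symbol presentation~\eqref{eqn:Aquat} to reduce the splitting condition to $\sigma_k(\beta)>0$, and finally invoke~\eqref{eqn:sigmakappa}. One small arithmetic slip: since $\lambda_b = 2\cos(2\pi/b)$, you should have $\sigma_k(\lambda_b^2-4) = -4\sin^2(2k\pi/b)$ rather than $-4\sin^2(k\pi/b)$, but this does not affect the sign and hence the argument goes through unchanged.
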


\begin{proof}
We refer to \eqref{eqn:Aquat}.  At the real place $\sigma_k$ with $k \in (\Z/2m\Z)^\times/H$, we have 
\[ A_k=A \otimes_{E,\sigma_k} \R \simeq \quat{\sigma_k(\lambda_b)^2-4, (\sigma_k(\lambda_b)+2)(\sigma_k(\lambda_c)+2)\sigma_k(\beta)}{\R}. \]
Since $|\sigma_k(\lambda_b)|=2\lvert\cos(2k\pi/b)\rvert<2$, we have $\sigma_k(\lambda_b)^2-4<0$ (since $b \geq 3$).  Similarly, $\sigma_k((\lambda_b+2)(\lambda_c+2))>0$, so $A$ is split at the place $\sigma_k$ if and only if $\sigma_k(\beta)>0$.  The result then follows from \eqref{eqn:sigmakappa}.
\end{proof}

In section 2, we compute the size of the group $H$ (multiplicity) and in section 3 we investigate explicitly the curvature $\kappa(a,b,c;k)$.

\begin{rmk}
We conclude this section with the connection made in the introduction to the intuition of ``swelling'' triangles.  
We identify the acute triangle $T(a,b,c)$ with the cosines of its angles
\[ (\cos \pi/a, \cos \pi/b, \cos \pi/c)=\frac{1}{2}(\lambda_{2a},\lambda_{2b},\lambda_{2c}) \]
and so the conjugate triangle, with angles swollen by a factor $k$, has angles with cosines
\[ (\cos k\pi/a, \cos k\pi/b, \cos k\pi/c)=\frac{1}{2}\sigma_k(\lambda_{2a},\lambda_{2b},\lambda_{2c}). \]

If the quaternion algebra $A$ is split at $\sigma_k$, then under this embedding we have a Galois conjugate embedding $\Delta \hookrightarrow \PSL_2(\R)$, and the fixed points of $\delta_a,\delta_b,\delta_c$ form a hyperbolic triangle with angles having the above cosines; on the other hand, if $A$ is ramified at $\sigma_k$, then $\Delta \hookrightarrow \HH^1/\{\pm 1\}$ where $\HH^1$ is a compact group isomorphic to the $3$-sphere, and we similarly obtain a spherical triangle.  

Together with Lemma \ref{lem:adimkappa}, this justifies the name \emph{curvature} to the function $\kappa$: when $\kappa(a,b,c;k)>0$ the ambient space is the sphere (positive curvature), and when $\kappa(a,b,c;k)<0$ the ambient space is the hyperbolic plane (negative curvature).  (And although this case does not arise for us, the case $\kappa(a,b,c;k)=0$ would correspond to the flat Euclidean plane.)

In this way, we capture the intuition of examining the effect of swelling the angles of a hyperbolic triangle.  (Counting the corresponding triangles with the multiplicity coming from the subgroup $H$, as opposed to the smaller subgroup $\Gal(K/F)$, corresponds to the additional condition imposed by the product relation $\delta_a\delta_b\delta_c=1$ in the triangle group $\Delta$.)
\end{rmk}

\section{Multiplicity}

Our first task is to understand the multiplicity given by the size of the groups $H_2 \leq H \leq H_1$ in the field diagram \eqref{eqn:fielddiagram}, using Galois theory.  $[H_1:H] \leq 2$, so we first compute $\#H_1$ and then decide if $H_1=H$ or not.  We retain the notation from the previous section.

\begin{thm} \label{thm:mult}
Let $a,b,c \in \Z_{\geq 2}$.  Let $t$ be the number of coprime pairs among $a,b,c$ and let $u$ be the number of pairs whose gcd is $\leq 2$.  We have $\#H_2=\max(2,2^t)$.  
\begin{enumalph}
\item If $a,b,c$ are all odd, then $\#H_1=\max(2,2^u)$ and $H=H_1$.  
\item Suppose that at least one of $a,b,c$ is even.  Then $\#H_1=2\max(2,2^u)$, and $H=H_1$ if and only if one of the following holds, for some permutation of $a,b,c$:
\begin{enumroman}
\item[\textup{(i)}] $u\leq 1$ and $\ord_2(a)=\ord_2(b)> \ord_2(c)$, or
\item[\textup{(ii)}] $u=2$ and $\gcd(a,b)=\gcd(a,c)=1$ and $\ord_2(b)=\ord_2(c)>0$.
\end{enumroman}
\end{enumalph}
\end{thm}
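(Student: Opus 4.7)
The plan is to realize each of $H_1, H_2$ as a concrete subgroup of $G \simeq (\Z/2m\Z)^\times$ cut out by sign congruences, count its order using the Chinese Remainder Theorem after determining which sign choices are mutually compatible, and then decide the index-two question $H \trianglelefteq H_1$ by analyzing a single character. Concretely, $\sigma_k$ fixes $\lambda_{2s} = \zeta_{2s}+\zeta_{2s}^{-1}$ iff $k \equiv \pm 1 \pmod{2s}$, and fixes $\lambda_s$ iff $k \equiv \pm 1 \pmod s$, so $H_2$ and $H_1$ are the intersections of these three congruence conditions as $s$ runs over $\{a,b,c\}$.

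For $\#H_2$, I would count sign triples $(\epsilon_a,\epsilon_b,\epsilon_c) \in \{\pm 1\}^3$ for which the system $k \equiv \epsilon_s \pmod{2s}$ has a solution mod $2m$. By CRT this reduces to pairwise compatibility mod $2\gcd(s,s')$; since $1 \not\equiv -1 \pmod 4$, compatibility is automatic when $\gcd(s,s') = 1$ and otherwise forces $\epsilon_s = \epsilon_{s'}$. Encoding forced equalities as edges of a graph on three vertices and counting connected components yields $\max(1,t)$ components and therefore $\#H_2 = \max(2, 2^t)$. For $\#H_1$, the same enumeration applies but the threshold shifts: compatibility is automatic exactly for $\gcd(s,s') \leq 2$ (the $u$-count) and forces equality when $\gcd(s,s') \geq 3$, producing $\max(2, 2^u)$ compatible sign triples. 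In case (a), $m$ is odd and each sign triple determines $k$ uniquely mod $2m$, giving $\#H_1 = \max(2,2^u)$; in case (b), $m$ is even and the $2$-adic part of $k$ has one additional bit of freedom beyond what the sign constraints pin down, doubling the count to $2\max(2,2^u)$. Moreover, when $s$ is odd the parity of $k$ automatically upgrades $k \equiv \pm 1 \pmod s$ to $k \equiv \pm 1 \pmod{2s}$, so the character $\eta_s(\sigma_k) = \sigma_k(\lambda_{2s})/\lambda_{2s}$ is trivial on $H_1$; in case (a) all three $\eta_s$ are trivial, hence $H_1$ fixes $\lambda_{2a}\lambda_{2b}\lambda_{2c}$ and $H = H_1$.

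For case (b) the key observation is that $H = \ker(\eta_a\eta_b\eta_c)$ as a character on $H_1$, and for $s$ even $\eta_s$ is detected by a single bit of the $2$-adic component $k_2 \in (\Z/2^{\ord_2(m)+1}\Z)^\times$, namely the bit distinguishing the lift $k \equiv \pm 1 \pmod{2s}$ from the lift $k \equiv \pm 1 + s \pmod{2s}$. I would organize the analysis by the multiset $(\ord_2 a, \ord_2 b, \ord_2 c)$ together with the forced sign equalities coming from $\gcd \geq 3$, which correlate the bits entering the characters $\eta_s$ across positive-valuation places. The claim is then that exactly the configurations (i) and (ii) force the product $\eta_a\eta_b\eta_c$ to equal $+1$ for every $k_2$ arising from an element of $H_1$: in (i), the two maximal-valuation even entries are locked to each other by the forced equality and their contributions cancel, leaving the third contribution trivial; in (ii), the two even entries share their positive $2$-valuation but are sign-decoupled from the odd third, and the parity of the common bit causes the two contributions to match. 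Outside of these configurations I would construct an explicit $k \in H_1$ whose $2$-adic bits flip the value of $\eta_a\eta_b\eta_c$ to $-1$, certifying $H \neq H_1$.

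The main obstacle is this last case analysis. The interplay between coprimality (which governs the sign-assignment graph controlling $H_1$) and the $2$-adic valuations (which determine which bits of $k_2$ are free and how each $\eta_s$ reads them) produces several borderline subcases---$\ord_2(s) = 1$ versus $\geq 2$, pairs with $\gcd$ equal to $2$ versus $\geq 3$, agreements forced at $4$ versus only at $2$---and verifying both the sufficiency and necessity halves of the biconditional in (b) requires a careful enumeration, as a misidentified edge case can corrupt the statement of (i) or (ii).
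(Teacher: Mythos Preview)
Your computations of $\#H_2$ and $\#H_1$ via sign-triples and CRT compatibility are exactly the paper's argument, just pleasantly rephrased through the connected-components language; likewise your proof of (a) via the parity upgrade for odd $s$ is the paper's observation that $k \equiv a\pm 1 \pmod{2a}$ forces $k$ even.

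For part (b) your route diverges. The paper does not introduce the characters $\eta_s$ or the kernel description $H = \ker(\eta_a\eta_b\eta_c|_{H_1})$; instead it writes down the two congruence templates
\[
k \equiv a\pm 1 \pmod{2a},\quad k \equiv \pm 1 \pmod{2b},\quad k \equiv \pm 1 \pmod{2c}
\]
and the all-shifted analogue, and for each of the five essentially distinct sign patterns works out by hand the CRT solvability condition in terms of $\ord_2$ and $\gcd$. This produces five explicit criteria whose disjunction characterizes $H \neq H_1$, and the final step is to collapse that disjunction into the complement of (i)$\cup$(ii) by casing on $u$. Your character viewpoint is a legitimate and somewhat cleaner organization: it makes the sufficiency of (i) and (ii) transparent, since in both cases the two even entries share $\ord_2 = \ord_2(m)$ and hence $\eta$ on each reads the \emph{same} top $2$-adic bit of $k$, forcing $\eta_a\eta_b\eta_c \equiv 1$. (A small correction: in (i) the cancellation is driven by this shared bit, not by the sign-locking you cite; the sign graph being connected when $u\le 1$ is what makes the bit well-defined, but it is the equality of $2$-valuations that makes the two $\eta$'s read it identically.)

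What neither framework avoids is the necessity direction. The paper's five-case enumeration and your promised ``construct an explicit $k$'' are the same work under different names, and you have not yet done it. There is no shortcut here: you must still walk through the configurations of $(\ord_2 a,\ord_2 b,\ord_2 c)$ against the $\gcd$-graph and, in each case outside (i)$\cup$(ii), exhibit a $2$-adic residue for $k$ that flips exactly one (or three) of the $\eta_s$. The paper's Cases (Ia)--(IIb) give you a template for this, and in fact the conditions it derives (e.g.\ $\ord_2(a) > \ord_2(b),\ord_2(c)$, or $\ord_2(a)=\ord_2(b)=\ord_2(c)$) are precisely the bit-configurations in your language for which the product character is nontrivial.
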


\begin{proof}
By Galois theory, 
\[ H_2 \simeq \{k\in (\Z/2m\Z)^\times : \cos(k\pi/s) = \cos(\pi/s) \text{ for $s=a,b,c$}\}. \]
We have $\cos(k\pi/s)=\cos(\pi/s)$ if and only if $k \equiv \pm 1 \pmod{2s}$, so $k \in (\Z/2m\Z)^\times$ has $k \in H_2$ if and only if $k$ is a solution to the simultaneous congruences
\begin{align*}
k&\equiv \pm1\pmod{2a}\\
&\equiv \pm1\pmod{2b}\\
&\equiv \pm1\pmod{2c}
\end{align*}
where the signs may be chosen arbitrarily (among eight possibilities).  Choosing these signs consistently, we always have the solution $k \equiv \pm 1 \pmod{2m}$, so $\#H_2 \geq 2$.  More generally, for $\epsilon \in \{1,-1\}$, by the Chinese Remainder Theorem there exists a $k\in \Z/2m\Z$ that satisfies
\begin{align*}
k&\equiv +\epsilon \pmod{2a}\\
&\equiv +\epsilon\pmod{2b}\\
&\equiv -\epsilon\pmod{2c}
\end{align*}
if and only if $\epsilon \equiv - \epsilon \pmod{\gcd(2a,2c)}$ and $\epsilon \equiv - \epsilon \pmod{\gcd(2b,2c)}$, which is the case if and only if $\gcd(a,b)=\gcd(b,c)=1$.  Necessarily, such a solution has $k \in (\Z/2m\Z)^\times$.  Therefore, after permuting $a,b,c$ and choosing $\epsilon$, we see that if $t=1$ then $\#H_2=2$, if $t=2$ then $\#H_2=4$ (one extra pair of solutions), and if $t=3$ then $\#H_2=8$ (every choice of signs is possible).  

A similar argument holds for $H_1$, where now $\cos^2(k\pi/s)=\cos^2(\pi/s)$ if and only if $k \equiv \pm 1 \pmod{s}$ and we solve
\begin{align*}
k&\equiv \pm1\pmod{a}\\
&\equiv \pm1\pmod{b}\\
&\equiv \pm1\pmod{c}.
\end{align*}
We find $\max(2,2^u)$ solutions modulo $m$ by the Chinese remainder theorem, as in the previous paragraph, and each such solution has $k \in (\Z/m\Z)^\times$.  For solutions modulo $2m$, we note that when $m$ is odd, the map $(\Z/2m\Z)^\times \to (\Z/m\Z)^\times$ is a bijection, but when $m$ is even, this map is surjective with kernel of size $2$.  Therefore, $\#H_1=\max(2,2^u)$ if $a,b,c$ are all odd, and $\#H_1=2\max(2,2^u)$ otherwise.  

We now turn to $H \leq H_1$.  For $\sigma_k \in H_1$, $\sigma_k \in H$ if and only if 
\[ \cos\frac{k\pi}{a}\cos\frac{k\pi}{b}\cos\frac{k\pi}{c}=\cos\frac{\pi}{a}\cos\frac{\pi}{b}\cos\frac{\pi}{c} \]
or equivalently, $\cos(k\pi/s)=-\cos(\pi/s)$ for an \emph{even} number of $s=a,b,c$.  Therefore, $H \neq H_1$ if and only if there exists a solution $k \in (\Z/2m\Z)^\times$ to the either the system of congruences
\begin{equation} \tag{I} \label{eqn:ka1}
\begin{aligned}
k&\equiv a\pm1\pmod {2a}\\
&\equiv \pm1\pmod {2b}\\
&\equiv \pm1\pmod {2c},
\end{aligned} 
\end{equation}
up to permutation of $a,b,c$, or the system of congruences
\begin{equation} \tag{II} \label{eqn:ka1b1c1}
\begin{aligned}
k&\equiv a\pm1\pmod {2a}\\
&\equiv b\pm1\pmod {2b}\\
&\equiv c\pm1\pmod {2c}.
\end{aligned} 
\end{equation}

We may now finish the proof of statement (a).  If all of $a,b,c$ are odd, there can be no solution to \eqref{eqn:ka1}--\eqref{eqn:ka1b1c1} since $k \equiv a \pm 1 \pmod{2a}$ implies that $k$ is even; and we conclude that $H=H_1$.

We henceforth assume that at least one of $a,b,c$ is even.  Then a solution $k \in \Z/2m\Z$ of these congruences automatically has $k \in (\Z/2m\Z)^\times$.  We consider first the congruences \eqref{eqn:ka1}; since $k$ is odd, $a$ is even.  We have 3 cases.  

\subsubsection*{Case \textup{(\ref{eqn:ka1}a)}}  First, by the Chinese Remainder Theorem, there exists a solution to
\begin{equation} \label{eqn:CONGRUENCE1}
\begin{aligned}
k&\equiv a+\epsilon\pmod{2a}\\
&\equiv \epsilon\pmod{2b}\\
&\equiv \epsilon\pmod{2c},
\end{aligned}
\end{equation}
if and only if $a+\epsilon\equiv \epsilon\pmod{2\gcd(a,s)}$ for both $s=b,c$.  We have $a+\epsilon \equiv \epsilon\pmod{2\gcd(a,s)}$ if and only if $2\gcd(a,s) \mid a$ if and only if $\ord_2(a)>\ord_2(s)$, and thus \eqref{eqn:CONGRUENCE1} has a solution if and only if 
\begin{equation} \label{eqn:hh1.1}
\ord_2(a)>\ord_2(b),\ord_2(c). 
\end{equation}

\subsubsection*{Case \textup{(\ref{eqn:ka1}b)}}  Second, there exists a solution to
\begin{equation} \label{eqn:CONGRUENCE2}
\begin{aligned}
k &\equiv a+\epsilon\pmod{2a}\\
&\equiv -\epsilon\pmod{2b}\\
&\equiv -\epsilon\pmod{2c},
\end{aligned}
\end{equation}
if and only if $a+\epsilon\equiv -\epsilon\pmod{2\gcd(a,s)}$ for $s=b,c$.  Recalling $a$ is even,
\begin{align*}
&a+\epsilon\equiv -\epsilon\pmod{2\gcd(a,s)} \\
&\qquad\Leftrightarrow 2\gcd(a,s) \mid (a+2\epsilon) \\
&\qquad\Leftrightarrow \text{$\gcd(a,s) \leq 2$ and ($\gcd(a,s)=2 \Rightarrow \ord_a(2)=1$)}
\end{align*}
So \eqref{eqn:CONGRUENCE2} has a solution if and only if 
\begin{equation} \label{eqn:hh1.2}
\text{$\gcd(a,b),\gcd(a,c) \leq 2$ and ($\gcd(a,b)=2$ or $\gcd(a,c)=2$ $\Rightarrow$ $\ord_2(a)=1$).}
\end{equation}

\subsubsection*{Case \textup{(\ref{eqn:ka1}c)}}  Third, by similar arguments, there exists a solution to
\begin{equation} \label{eqn:CONGRUENCE3}
\begin{aligned}
k&\equiv a+\epsilon\pmod{2a}\\
&\equiv \epsilon\pmod{2b}\\
&\equiv -\epsilon\pmod{2c},
\end{aligned}
\end{equation}
if and only if 
\begin{equation} \label{eqn:hh1.3}
\begin{gathered}
\text{$\ord_2(a) > \ord_2(b)$ and $\gcd(a,c) \leq 2$ and $\gcd(b,c)=1$ and} \\
\text{($\gcd(a,c)=2$ $\Rightarrow$ $\ord_2(a)=1$).}
\end{gathered}
\end{equation}

Finally, we consider the congruences \eqref{eqn:ka1b1c1}.  Then we must have $a,b,c$ all even, and we have two cases.  

\subsubsection*{Case \textup{(\ref{eqn:ka1b1c1}a)}}  
First, there exists a solution to
\begin{equation} \label{eqn:CONGRUENCE4}
\begin{aligned}
k&\equiv a+\epsilon\pmod{2a}\\
&\equiv b+\epsilon\pmod{2b}\\
&\equiv c+\epsilon\pmod{2c},
\end{aligned}
\end{equation}
if and only if $a+\epsilon\equiv b+\epsilon\pmod{2\gcd(a,b)}$ and symmetrically for all three pairs.  Since
\begin{align*}
a+\epsilon\equiv b+\epsilon\pmod{2\gcd(a,b)} &\Leftrightarrow a-b\equiv 0\pmod{2\gcd(a,b)},\\
&\Leftrightarrow a/\!\gcd(a,b) - b/\!\gcd(a,b) \textup{ is even}\\
&\Leftrightarrow \ord_2(a) = \ord_2(b).
\end{align*}
The congruences \eqref{eqn:CONGRUENCE4} have a solution if and only if 
\begin{equation} \label{eqn:hh1.4}
\ord_2(a) = \ord_2(b) = \ord_2(c). 
\end{equation}

\subsubsection*{Case \textup{(\ref{eqn:ka1b1c1}b)}}  Second, there exists a solution to
\begin{equation} \label{eqn:CONGRUENCE5}
\begin{aligned}
k&\equiv a-\epsilon\pmod{2a}\\
&\equiv b+\epsilon\pmod{2b}\\
&\equiv c+\epsilon\pmod{2c},
\end{aligned}
\end{equation}
if and only if $a-\epsilon\equiv s+\epsilon\pmod{2\gcd(a,s)}$ for both $s=b,c$ and as in the previous case $b+\epsilon\equiv c+\epsilon\pmod{2\gcd(b,c)}$, so $\ord_2(b)=\ord_2(c)$.  Since all of $a,b,c$ are even,
\begin{align*}
&a-\epsilon\equiv s+\epsilon\pmod{2\gcd(a,s)} \\
&\qquad\Leftrightarrow a-s\equiv 2\epsilon\pmod{2\gcd(a,s)},\\
&\qquad\Leftrightarrow \gcd(a,s)=2  \text{ and } a-s\equiv 2\pmod{4}.
\end{align*}
Thus the congruences have a solution if and only if
\begin{equation} \label{eqn:hh1.5}
\begin{gathered}
\text{$\gcd(a,b)=\gcd(a,c)=2$ and} \\
\text{ ($\ord_2(a)>\ord_2(b)=\ord_2(c)=1$ or $\ord_2(a)=1<\ord_2(b)=\ord_2(c)$).}
\end{gathered}
\end{equation}

We have shown that $H \neq H_1$ if and only if for some permutation of $a,b,c$, one of the conditions \eqref{eqn:hh1.1}, \eqref{eqn:hh1.2}, \eqref{eqn:hh1.3}, \eqref{eqn:hh1.4}, or \eqref{eqn:hh1.5} holds.  What remains is to simplify these by splitting these into cases.

So suppose $H=H_1$.

\subsubsection*{Case $u=3$} Suppose $u=3$.  Then by \eqref{eqn:hh1.1} none of $a,b,c$ are divisible by $4$ (and still at least one of $a,b,c$ is even).  If all of $a,b,c$ are even, then \eqref{eqn:hh1.4}, a contradiction.  If say $a,c$ are even and $b$ is odd, then \eqref{eqn:hh1.3} holds, a contradiction.  If say $a$ is even and $b,c$ are odd, then \eqref{eqn:hh1.1} holds, again a contradiction.  We conclude that $H \neq H_1$ in this case.  

\subsubsection*{Case $u=2$} Suppose $u=2$, with $\gcd(a,b),\gcd(a,c) \leq 2$ but $\gcd(b,c)>2$.  Suppose say $\gcd(a,b)=2$; then by \eqref{eqn:hh1.2}, we have $\ord_2(a)>1=\ord_2(b)=\ord_2(c)$, and this contradicts \eqref{eqn:hh1.5}.  Thus $\gcd(a,b)=\gcd(a,c)=1$.  If $a$ is even, then $b,c$ are odd, and this contradicts \eqref{eqn:hh1.1}, so $a$ is odd, and without loss of generality $b$ is even, and moreover \eqref{eqn:hh1.1} implies that $\ord_2(b)=\ord_2(c)>0$.  With these hypotheses as in (ii), each of these five conditions fail, so conversely we have $H=H_1$.

\subsubsection*{Case $u \leq 1$}  If $u \leq 1$, then by \eqref{eqn:hh1.1} and \eqref{eqn:hh1.4}, the maximum $\max(\ord_2(a),\ord_2(b),\ord_2(c))$ must be achieved by exactly two values, say $\ord_2(a)=\ord_2(b)>\ord_2(c)$.  But then already the five conditions fail, so again we have $H=H_1$, and the proof is complete.  
\end{proof}

The case in which $a$, $b$, or $c$ is infinite can be proven similarly.  

\begin{thm} \label{thm:multoo}
Let $a,b \in \Z_{\geq 2} \cup \{\infty\}$ and $c=\infty$.  Let $e \geq 1$ be the number of $s=a,b,c$ such that $s=\infty$.  Then the following hold.
\begin{enumalph}
\item If $e=3$, then $H_1=H=H_2$ is the trivial group.  
\item If $e=2$, then $H_1=H=H_2$ is the group of order $2$.
\item Suppose $e=1$.  Let $g=\gcd(a,b)$.  Then 
\[ \#H_1=\begin{cases}
8, \text{ if $g\leq2$ and $a$ or $b$ is even;} \\
2, \text{ if $g>2$ and $a$ and $b$ are odd;}\\
4, \text{ otherwise;}
\end{cases} \]
and
\[ \#H_2=\begin{cases}
4, \text{ if $g=1$;} \\
2, \text{ otherwise;}
\end{cases} \]
and $H=H_2$ if and only if $\ord_2(a) = \ord_2(b)$ and $\gcd(a,b)\neq2$.
\end{enumalph}
\end{thm}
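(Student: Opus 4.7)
The strategy is to mirror the proof of Theorem \ref{thm:mult}, exploiting the fact that whenever $s=\infty$ we have $\zeta_{2s}=1$ and $\lambda_s=\lambda_{2s}=2$; such values are fixed by every element of $G$, so they impose no congruence condition on $k$. As in the finite case, the subgroups $H_2$, $H_1$, and $H$ are cut out in $(\Z/2m\Z)^\times$ by the Galois translations of (i) $k\equiv\pm 1\pmod{2s}$, (ii) $k\equiv\pm 1\pmod{s}$, and (iii) agreement of the product $\cos(k\pi/a)\cos(k\pi/b)\cos(k\pi/c)$ with its original value. When $c=\infty$, the last condition reduces to requiring the signs $\cos(k\pi/s)/\cos(\pi/s)\in\{\pm 1\}$ for the finite values of $s$ to multiply to $+1$.

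Parts (a) and (b) then follow quickly. When $e=3$, no finite $s$ appears, so $K=\Q(\zeta_2)=\Q$ and $G$ is trivial; all three subgroups are the trivial group. When $e=2$, without loss of generality $b=c=\infty$, so $K=\Q(\zeta_{2a})$ and the only active congruence is modulo $a$; the subgroup $H_2$ is cut out by $k\equiv\pm 1\pmod{2a}$, giving the two solutions $\pm 1$, and one verifies directly that the additional generators of $F_1$ and $E$ do not enlarge the fixed field, yielding $H_1=H=H_2$ of order $2$.

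For part (c), with $c=\infty$ and $a,b$ finite, we apply the Chinese Remainder Theorem exactly as in the proof of Theorem \ref{thm:mult}, but now with only two finite moduli. For $H_2$, the system $k\equiv\pm 1\pmod{2a}$, $k\equiv\pm 1\pmod{2b}$ admits a solution with opposite signs if and only if $\gcd(a,b)=1$, giving the stated dichotomy $\#H_2\in\{2,4\}$. For $H_1$, the analogous count modulo $m=\lcm(a,b)$ yields $4$ solutions when $g\leq 2$ and $2$ when $g>2$; lifting to $(\Z/2m\Z)^\times$ then doubles the count exactly when $m$ is even, i.e., when $a$ or $b$ is even, producing the three-case formula. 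Finally, $H=H_2$ holds precisely when no $\sigma_k\in H_1\setminus H_2$ preserves $\cos(k\pi/a)\cos(k\pi/b)$; the same kind of five-case analysis used in Theorem \ref{thm:mult} (solvability of $k\equiv a\pm 1\pmod{2a}$, $k\equiv b\pm 1\pmod{2b}$, and sign-flipped variants) collapses here, with $c$ absent, to the single necessary and sufficient condition $\ord_2(a)=\ord_2(b)$ and $\gcd(a,b)\neq 2$.

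The main obstacle is bookkeeping rather than conceptual: one must enumerate the CRT subsystems carefully and track the lift from $(\Z/m\Z)^\times$ to $(\Z/2m\Z)^\times$ in the mixed-parity cases. Because only two finite moduli are present, the case analysis is substantially shorter than that of Theorem \ref{thm:mult}, and no new techniques beyond the ones used there are required.
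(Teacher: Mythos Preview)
Your proposal takes essentially the same approach as the paper: reduce each of $H_2$, $H_1$, $H$ to systems of congruences modulo $2a$, $2b$ (the $c=\infty$ factor contributing nothing), count solutions via the Chinese Remainder Theorem exactly as in Theorem~\ref{thm:mult}, and handle the lift $(\Z/m\Z)^\times \to (\Z/2m\Z)^\times$ according to the parity of $m$. The paper's own proof is precisely this, and it explicitly says ``The proof is similar to and simpler than that of Theorem~\ref{thm:mult}, so we are brief.''

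One point deserves care. You frame the final step as determining when $H=H_2$, i.e.\ when no $\sigma_k\in H_1\setminus H_2$ preserves the product $\cos(k\pi/a)\cos(k\pi/b)$, and you list the two-flip system $k\equiv a\pm 1\pmod{2a}$, $k\equiv b\pm 1\pmod{2b}$ among the cases. The paper instead analyzes when $H\neq H_1$, i.e.\ when some $\sigma_k\in H_1$ \emph{reverses} the sign of the product; since $c=\infty$ contributes no sign, this means exactly one flip among $a,b$, and the relevant systems are $k\equiv a\pm 1\pmod{2a}$, $k\equiv \pm 1\pmod{2b}$ (and the $a\leftrightarrow b$ swap). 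These two questions are not equivalent in general, because $[H_1:H_2]$ can equal $4$ here (for instance when $\gcd(a,b)=2$), so $H\neq H_1$ does not force $H=H_2$. The condition $\ord_2(a)=\ord_2(b)$ and $\gcd(a,b)\neq 2$ that emerges from the one-flip analysis is in fact the condition for $H=H_1$; the ``$H=H_2$'' in the theorem statement appears to be a slip for ``$H=H_1$'' (and the paper's own proof derives the condition from the $H\neq H_1$ analysis). So your strategy is right, but you should run the one-flip congruence systems rather than the two-flip one you name, and interpret the outcome as a criterion for $H=H_1$.
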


\begin{proof}
We may assume without loss of generality that $a\leq b \leq c$.  For part (a), when $e=3$ and $a=b=c=\infty$, we cannot multiply by any $k$ and thus only have one triple.

So we turn to (b), and we suppose $e=2$.  Thus $a$ is finite and $b=c=\infty$.  Then the size of both $H$ and $H_2$ is the number of $k\in (\Z/2a\Z)^\times$ such that
$$k\equiv\pm1\pmod{2a},$$
and the size of $H_1$ is the number of $k\in (\Z/2a\Z)^\times$ such that
$$k\equiv\pm1\pmod{a}.$$
Both congruences have only two solutions: $k=\pm 1$, so $\#H_1=\#H=\#H_2 = 2$.

So we are left with the case $e=1$; we suppose $a$ and $b$ are finite and $c=\infty$.  The proof is similar to and simpler than that of Theorem \ref{thm:mult}, so we are brief.  We find $\#H_2$ in a similar manner to in Theorem \ref{thm:mult}, solving the congruences
\begin{align*}
k&\equiv \pm1\pmod{2a}\\
&\equiv \pm1\pmod{2b}
\end{align*}
We have two trivial solutions, and two additional solutions if and only if $\gcd(a,b)=1$.  Likewise, we find $\#H_1$ by solving the congruences
\begin{align*}
k&\equiv \pm1\pmod{a}\\
&\equiv \pm1\pmod{b},
\end{align*}
obtaining two trivial solutions, and two additional solutions if and only if $\gcd(a,b)\leq 2$.  But as in the proof of Theorem \ref{thm:mult}, we must double this value if either of $a,b$ is even.

$H\neq H_1$ if and only if, for some permutation of $a,b$, there exists a solution $k \in (\Z/2m\Z)^\times$ to the simultaneous congruences
\begin{align*}
k&\equiv a\pm1\pmod{2a}\\
&\equiv \pm1\pmod{2b}.
\end{align*}
Since $k$ is odd, we have no solution if $a$ is odd.  Hence, if $a$ and $b$ are odd then $H=H_1$.  

Suppose that $a$ is even.  The congruences have a solution if and only if $a\pm1 \equiv \pm1\pmod{\gcd(2a,2b)$.}  Let $\epsilon \in \{\pm1\}$.  Then
\begin{align*}
a+\epsilon \equiv \epsilon\pmod{\gcd(2a,2b)} &\Leftrightarrow a \equiv 0 \pmod{2\gcd(a,b)}\\
&\Leftrightarrow \ord_2(a)>\ord_2(b).
\end{align*}
and
\begin{align*}
a+\epsilon \equiv -\epsilon\pmod{\gcd(2a,2b)} &\Leftrightarrow a\pm2 \equiv 0 \pmod{2\gcd(a,b)}\\
&\Leftrightarrow \gcd(a,b)=1 \textup{ or } (\gcd(a,b)=2\textup{ and }\ord_2(a)=1).
\end{align*}

So $H\neq H_1$ if and only if, for some permutation of $a,b$, $a$ is even and either (i) $\ord_2(a) > \ord_2(b)$ or (ii) $\gcd(a,b)=1 \textup{ or } (\gcd(a,b)=2\textup{ and }\ord_2(a)=1)$.  Note that if $a$ is even and $\gcd(a,b)=1$, then $\ord_2(a) > \ord_2(b)$.  Also, if $\ord_2(a) \leq \ord_2(b)$, then $\gcd(a,b)=2$ implies that $\ord_2(a)=1$.  Furthermore, note that $(\ord_2(a) > \ord_2(b)$ or $\gcd(a,b)=2)$ implies that $a$ is even.  Therefore, if one of $a,b$ is even, then $H\neq H_1$ if and only if $\ord_2(a) > \ord_2(b)$ or $\gcd(a,b)=2$ or  $\ord_2(b) > \ord_2(a)$.  Putting it all together, $H=H_2$ if and only if $\ord_2(a) = \ord_2(b)$ and $\gcd(a,b)\neq2$.  Note that this also covers the case in which $a,b$ are both odd.
\end{proof}

\begin{cor}\label{multthm} 
There exists an algorithm that takes as input $a,b,c \in \Z_{\geq 2} \cup \{\infty\}$ and produces as output $\#H$ using $O(\log^2 m)$ bit operations.
\end{cor}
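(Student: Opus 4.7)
The plan is to implement the case analyses of Theorems \ref{thm:mult} and \ref{thm:multoo} as a straight-line algorithm, with cost dominated by a small number of gcd and $2$-adic valuation computations on inputs of size at most $m$. Since $a,b,c \leq m$ whenever they are finite, every integer we manipulate has bit-length $O(\log m)$.

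First I would check in constant time how many of $a,b,c$ equal $\infty$. If at least one is $\infty$, I would invoke Theorem \ref{thm:multoo}: when two or three equal $\infty$ the answer is read off directly, and when exactly one (say $c$) is $\infty$ I would compute $g=\gcd(a,b)$ via the classical Euclidean algorithm in $O(\log^2 m)$ bit operations, together with $\ord_2(a)$ and $\ord_2(b)$ in $O(\log m)$ bit operations each; the tabulated formulas for $\#H_1$, $\#H_2$, and the criterion $H = H_1$ versus $H = H_2$ then evaluate in constant additional operations.

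Otherwise all of $a,b,c$ are finite, and I would invoke Theorem \ref{thm:mult}. I would first compute the three pairwise gcds $\gcd(a,b),\gcd(a,c),\gcd(b,c)$, each in $O(\log^2 m)$ bit operations, and the $2$-adic valuations $\ord_2(a),\ord_2(b),\ord_2(c)$, each in $O(\log m)$ bit operations. From these I read off $t$ (the number of coprime pairs) and $u$ (the number of pairs of gcd $\leq 2$), obtaining $\#H_2 = \max(2,2^t)$, and, depending on whether all of $a,b,c$ are odd, either $\#H_1 = \max(2,2^u)$ or $\#H_1 = 2\max(2,2^u)$. To decide whether $[H_1 : H] = 2$, I would check in $O(1)$ whether some permutation of $a,b,c$ satisfies condition (i) or (ii) of Theorem \ref{thm:mult}(b); each such condition depends only on the already-computed gcds, $2$-adic valuations, and parities, so the verification is constant-time once the preprocessing is done.

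The total running time is dominated by the at most three Euclidean gcd computations on integers of bit-length $O(\log m)$, giving $O(\log^2 m)$ bit operations overall; every other step (parity checks, comparisons, the $2$-adic valuations, and evaluating small powers of $2$) fits within $O(\log m)$. I do not anticipate a genuine obstacle: the substantive mathematics is already contained in Theorems \ref{thm:mult} and \ref{thm:multoo}, and the only thing to verify is that each discriminating condition appearing there is expressible in terms of quantities computable within the stated bit-complexity budget, which is routine.
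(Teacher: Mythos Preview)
Your proposal is correct and follows essentially the same approach as the paper: reduce to the case analyses of Theorems \ref{thm:mult} and \ref{thm:multoo}, and observe that the only nontrivial computations are a constant number of $\gcd$ and $\ord_2$ evaluations on integers bounded by $m$, costing $O(\log^2 m)$ and $O(\log m)$ bit operations respectively. The paper's proof is terser but identical in substance.
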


\begin{proof}
Applying either Theorem \ref{thm:mult} or Theorem \ref{thm:multoo}, we need only apply a constant number of applications of $\gcd$ or $\ord_2$.  We can compute $\gcd(a,b)$ using $O(\log^2(\max(a,b)))$ bit operations, and $\ord_2(a)$ can be implemented in $O(\log(a))$ time by finding the number of trailing zeros in the binary representation of $a$.  Since $a,b,c \leq m$, the result follows.
\end{proof}

\section{Curvature}

In this section, we discuss a method of determining the sign of the curvature $\kappa(a,b,c;k)$ defined in \eqref{eqn:kappa_curvature}.  Our main result is that there is an easy exact calculation that determines this sign; this characterization was essentially given by Takeuchi~\cite{Takeuchi}.

We continue with our assumption that $a,b,c \in \Z_{\geq 2} \cup \{\infty\}$ satisfy $a \leq b \leq c$ and $\chi(a,b,c)<0$.  In particular, $\kappa(a,b,c;1)<0$.  To avoid potential confusion with notation, we treat the case $c=\infty$ right away.

\begin{lem} \label{lem:koo}
If $c=\infty$, then $\kappa(a,b,c;k) < 0$ for all $k \in (\Z/2m\Z)^\times$.
\end{lem}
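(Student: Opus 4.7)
The plan is to exploit the convention $\zeta_\infty = 1$, so that $\cos(k\pi/c) = \cos 0 = 1$ when $c = \infty$. Substituting this into the defining formula \eqref{eqn:kappa_curvature} and collecting terms, I expect the curvature to collapse to the clean identity
\[ \kappa(a,b,\infty;k) \;=\; -\left(\cos\frac{k\pi}{a} + \cos\frac{k\pi}{b}\right)^2, \]
which is manifestly $\leq 0$. The lemma then reduces to showing that the sum in parentheses is never zero when $\gcd(k, 2m) = 1$.

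First I would dispose of the degenerate subcases: when $a = b = c = \infty$ the only class is $k = 1$ and the expression equals $-4$; when $b = c = \infty$ with $a$ finite, the vanishing condition becomes $\cos(k\pi/a) = -1$, which would require $a \mid k$ with $k/a$ odd, ruled out by $\gcd(k,a) = 1$ and $a \geq 2$.

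For the substantive case where $a,b$ are both finite, I would apply the sum-to-product identity to convert $\cos(k\pi/a)+\cos(k\pi/b) = 0$ into the condition that either $k(a+b)$ or $k(a-b)$ is an odd multiple of $ab$. Setting $g = \gcd(a,b)$ and $a = ga'$, $b = gb'$ with $\gcd(a',b') = 1$ (so $m = ga'b'$), each alternative simplifies to $k(a' \pm b') = (2n+1)g a' b'$. The easy coprimality $\gcd(a' \pm b', a'b') = 1$ then forces $a'b' \mid k$; played off against $\gcd(k, 2m) = 1$, this forces $a'b' = 1$, hence $a = b$. The minus case collapses trivially to $0 = (2n+1)a^2$, while the plus case narrows down via $2k = (2n+1)a$ with $k$ odd and coprime to $a$ to the single possibility $a = b = 2$. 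At this final step I would invoke the hyperbolicity hypothesis $\chi(a,b,c) < 0$: the triple $(2,2,\infty)$ has $\chi = 0$ and is excluded, completing the argument.

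The main obstacle is really just careful bookkeeping of the divisibility conditions in the finite-$a,b$ case, making sure to leverage every bit of the coprimality $\gcd(k, 2m) = 1$ and to invoke the hyperbolic inequality at exactly the right moment to rule out the knife-edge case $a = b = 2$; all of the individual manipulations are elementary.
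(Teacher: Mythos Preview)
Your proposal is correct and follows essentially the same approach as the paper: both substitute $\cos(k\pi/c)=1$ to obtain $\kappa(a,b,\infty;k)=-(\cos(k\pi/a)+\cos(k\pi/b))^2$, then argue that the parenthesized sum vanishes only for the excluded Euclidean triple $(2,2,\infty)$. The paper simply asserts that ``elementary arguments'' pin down $a=b=2$, whereas you supply those arguments in full via the sum-to-product identity and the divisibility bookkeeping; this is a faithful expansion rather than a different route.
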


\begin{proof}
If $c=\infty$, then $\cos(k \pi/c)=1$ so
\[	\kappa(a,b,c;k) 
	= 1-\left(\cos^2\frac{k\pi}{a}+\cos^2\frac{k\pi}{b}+1+2\cos^2\frac{k\pi}{a}\cos\frac{k\pi}{b}\right)\\
% 	&= -\left(\cos^2\frac{k_{a}\pi}{a}+\cos^2\frac{k_{b}\pi}{b}+2\cos^2\frac{k_{a}\pi}{a}\cos\frac{k_{b}\pi}{b}\right)\\
	= -\left(\cos\frac{k\pi}{a}+\cos\frac{k\pi}{b}\right)^2 \leq 0 \]
and equality holds if and only if $\cos(k\pi/a)=-\cos(k\pi/b)$; elementary arguments show that this holds if and only if $a=b=2$, and the triple $(2,2,\infty)$ is Euclidean, not hyperbolic.
\end{proof}

We therefore have the following computation of arithmetic dimension in a special case.

\begin{cor} \label{cor:adimoooo}
We have $\adim(\infty,\infty,\infty)=\adim(2,\infty,\infty)=1$, and for $a \geq 3$, $\adim(a,\infty,\infty)=\adim(a,a,\infty)=\phi(2a)/2=\phi(a)/\!\gcd(2,a)$.  
\end{cor}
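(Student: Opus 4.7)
The plan is to combine Lemma~\ref{lem:koo} with Lemma~\ref{lem:adimkappa} to reduce the corollary to a coset count, and then invoke Theorem~\ref{thm:multoo} case by case. Since $c=\infty$, Lemma~\ref{lem:koo} gives $\kappa(a,b,c;k)<0$ for every $k\in(\Z/2m\Z)^\times$. Substituting into Lemma~\ref{lem:adimkappa} collapses the count to
\[
\adim(a,b,\infty) = \#\bigl((\Z/2m\Z)^\times/H\bigr) = \phi(2m)/\#H,
\]
so everything reduces to reading off $m$ and $\#H$ in each of the four listed cases.

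I would then dispatch the cases in turn. For $(\infty,\infty,\infty)$, the convention $m=1$ makes the quotient a single class and Theorem~\ref{thm:multoo}(a) confirms that $H$ is trivial, so $\adim=1$. For $(2,\infty,\infty)$, we have $m=2$ and $\phi(2m)=2$, while Theorem~\ref{thm:multoo}(b) gives $\#H=2$, again yielding $\adim=1$. For $(a,\infty,\infty)$ with $a\geq 3$, Theorem~\ref{thm:multoo}(b) again gives $\#H=2$ and $\phi(2m)=\phi(2a)$, producing $\adim=\phi(2a)/2$. Finally, for $(a,a,\infty)$ with $a\geq 3$, the hypotheses $\ord_2(a)=\ord_2(a)$ and $\gcd(a,a)=a\neq 2$ both hold trivially, so Theorem~\ref{thm:multoo}(c) forces $H=H_2$ with $\#H_2=2$, and once more $\adim=\phi(2a)/2$.

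The final equality in the stated formula is then a brief parity check using the elementary identity $\phi(2a)=\phi(a)\gcd(2,a)$. There is no real obstacle in the argument: the curvature input from Lemma~\ref{lem:koo} is entirely clean, and the only mild bookkeeping is matching the $(a,a,\infty)$ triple to the correct clause of Theorem~\ref{thm:multoo}(c).
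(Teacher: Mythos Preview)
Your argument is correct and follows the same outline as the paper's: Lemma~\ref{lem:koo} forces every real place to be split, so $\adim=[E:\Q]=\phi(2m)/\#H$, and it remains only to see that $\#H=2$ in each case. The paper does this last step more directly, simply reading $E=\Q(\lambda_{2a})$ (hence $H=\{\pm 1\}$) off the field diagram~\eqref{eqn:fielddiagram} rather than invoking Theorem~\ref{thm:multoo}; your route through Theorem~\ref{thm:multoo} is a mild detour but perfectly valid.

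One small warning about your closing sentence: the identity $\phi(2a)=\phi(a)\gcd(2,a)$ is correct, but it gives $\phi(2a)/2=\phi(a)\gcd(2,a)/2$, not $\phi(a)/\gcd(2,a)$. The printed equality $\phi(2a)/2=\phi(a)/\gcd(2,a)$ is in fact false (take $a=3$: the left side is $1$, the right is $2$), so this is a typo in the statement rather than a gap in your reasoning---but your ``brief parity check'' does not actually verify it, and you should not claim that it does.
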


\begin{proof}
According to the field diagram \eqref{eqn:fielddiagram}, when either (1) $b=c=\infty$ or (2) $a=b$ and $c=\infty$, we have $m=a$ and $E=F=\Q(\lambda_{2a})$, so $H=\{\pm 1\} \leq (\Z/2a\Z)^\times=G$.  By Lemma \ref{lem:koo}, the arithmetic dimension is equal to $\#G/H=\phi(2a)/2$.
\end{proof}

For $s \in \{a,b,c\}$ with $s \neq \infty$, let $k_{s}$ denote the unique integer with $0\leq k_{s} \leq s$ such that 
\begin{equation} \label{eqn:defofks}
\cos\frac{k_{s}\pi}{s}=\cos\frac{k\pi}{s};
\end{equation}
we have $k_s=\lvert{k'\rvert}$ where $k' \in [-s,s]$ and $k \equiv k' \pmod{2s}$.  If $s=\infty$, we let $k_s=k$.  	Given $k$ and $s \neq \infty$, we can compute $k_s$ in a straightforward manner using $O(\log k \log s)$ bit operations.
		
%	\begin{align*}
%	\kappa(a,b,c,k) &= 1-\left(\cos^2\frac{k\pi}{a} + \cos^2\frac{k\pi}{b} + \cos^2\frac{k\pi}{c} + 2\cos\frac{k\pi}{a}\cos\frac{k\pi}{b}\cos\frac{k\pi}{c}\right) \\
%	&= 1-\left(\cos^2\frac{k_{a}\pi}{a} + \cos^2\frac{k_{b}\pi}{b} + \cos^2\frac{k_{c}\pi}{c} + 2\cos\frac{k_{a}\pi}{a}\cos\frac{k_{b}\pi}{b}\cos\frac{k_{c}\pi}{c}\right).
%	\end{align*}

Next, we have $\kappa(a,b,c;k) \neq 0$ for all $k \in (\Z/2m\Z)^\times$, since $\kappa(a,b,c;k)=\sigma_k(\kappa(a,b,c;1))$ and $\kappa(a,b,c;1) \neq 0$.  

We will now prove an important lemma that determines the sign of $\kappa(a,b,c;k)$; this result can be extracted from the work of Takeuchi \cite[(18)]{Takeuchi}.  

\begin{lem}\label{lem:range} 
Suppose that $a,b,c \in \Z_{\geq 2} \cup \{\infty\}$ and let $k \in (\Z/2m\Z)^\times$.  Then $\kappa(a,b,c;k) \leq 0$ if and only if
\begin{equation}  \label{eqn:rangeeqn}
\left\lvert\frac{k_{a}}{a} + \frac{k_{b}}{b}-1\right\rvert \leq \frac{k_c}{c} \leq 1-\left\lvert\frac{k_{a}}{a} - \frac{k_{b}}{b}\right\rvert, 
\end{equation}
and $\kappa(a,b,c;k)=0$ if and only if one of the two equalities holds in \eqref{eqn:rangeeqn}.
\end{lem}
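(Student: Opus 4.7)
The plan is to factor $1-\kappa$ as a product of four cosines and then read off the sign of $\kappa$ directly from that factorization. Writing $u = \cos\alpha$, $v = \cos\beta$, $w = \cos\gamma$ with $\alpha = k_a\pi/a$, $\beta = k_b\pi/b$, $\gamma = k_c\pi/c$ (each in $[0,\pi]$ by definition of $k_s$; the case $c=\infty$ is handled separately via Lemma~\ref{lem:koo}), I would first establish the trigonometric identity
\[
  1 - u^2 - v^2 - w^2 - 2uvw \;=\; -4\cos P\,\cos Q\,\cos R\,\cos S,
\]
where $P = \tfrac{1}{2}(\alpha+\beta+\gamma)$, $Q = \tfrac{1}{2}(-\alpha+\beta+\gamma)$, $R = \tfrac{1}{2}(\alpha-\beta+\gamma)$, and $S = \tfrac{1}{2}(\alpha+\beta-\gamma)$. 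The derivation is direct algebra: rewrite the left side as the difference of two squares $\sin^2\alpha\sin^2\gamma - (\cos\beta + \cos\alpha\cos\gamma)^2$, factor, apply the addition formulas $\cos\alpha\cos\gamma \pm \sin\alpha\sin\gamma = \cos(\alpha\mp\gamma)$, and finally apply the sum-to-product identity $\cos X + \cos Y = 2\cos\tfrac{X+Y}{2}\cos\tfrac{X-Y}{2}$ to each of the two binomial factors.

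Setting $x = k_a/a$, $y = k_b/b$, $z = k_c/c$, each in $[0,1)$ (strict inequality since $\gcd(k,2m)=1$ forbids $k_s = s$), we have $P = \tfrac{\pi}{2}(x+y+z) \in [0, \tfrac{3\pi}{2})$ and $Q, R, S \in [-\tfrac{\pi}{2}, \pi)$. Elementary cosine analysis on these ranges gives $\cos P \geq 0 \iff x+y+z \leq 1$, and $\cos Q \geq 0 \iff -x+y+z \leq 1$ (the lower bound $Q \geq -\tfrac{\pi}{2}$ being automatic from $x \leq 1$), with symmetric criteria for $\cos R$ and $\cos S$. The double inequality \eqref{eqn:rangeeqn} then unpacks into the four conditions
\[
x+y+z \geq 1, \quad x+y-z \leq 1, \quad x-y+z \leq 1, \quad -x+y+z \leq 1,
\]
corresponding precisely to $\cos P \leq 0$ together with $\cos Q, \cos R, \cos S \geq 0$; substituting this sign pattern into the factorization yields the sign of $\kappa$ asserted by the lemma, with the boundary case $\kappa = 0$ corresponding to exactly one of the four factors vanishing, i.e., to an equality in \eqref{eqn:rangeeqn}.

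The main obstacle is the converse direction: to rule out any other sign configuration among the four cosines that could produce the same sign of $\kappa$. For this I would invoke two observations. First, $\cos Q < 0$ forces $\beta + \gamma > \pi + \alpha \geq \pi$, hence $P > \tfrac{\pi}{2}$ and $\cos P \leq 0$; symmetric implications hold for $R$ and $S$. Consequently $\cos P \geq 0$ automatically forces $\cos Q, \cos R, \cos S \geq 0$. Second, two of $\cos Q, \cos R, \cos S$ cannot be negative simultaneously, since $\cos Q, \cos R < 0$ would give $y+z-x > 1$ and $x+z-y > 1$, which sum to $2z > 2$, absurd. These two constraints restrict the possible sign patterns to just three, and a direct inspection singles out the configuration corresponding to \eqref{eqn:rangeeqn} as the unique one matching the stated sign of $\kappa$, completing the equivalence.
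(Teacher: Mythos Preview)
Your approach is genuinely different from the paper's.  The paper does not factor $\kappa$; instead it views $\kappa$ as a quadratic in $z=\cos(k_c\pi/c)$, computes the two roots via the quadratic formula, simplifies the discriminant to $2\sin\alpha\sin\beta$, and recognises the roots as $\cos(\pi-|\alpha-\beta|)$ and $\cos|\alpha+\beta-\pi|$ using the cosine addition formulas.  Monotonicity of $\cos$ on $[0,\pi]$ then converts ``$z$ lies between the roots'' directly into \eqref{eqn:rangeeqn}.  Because a downward parabola has a fixed sign on each side of its roots, both implications come for free, and there is no separate converse argument.  Your four-cosine factorisation $\kappa=-4\cos P\cos Q\cos R\cos S$ is more symmetric in $a,b,c$ and makes the four linear conditions hidden inside the two absolute-value inequalities of \eqref{eqn:rangeeqn} visible as the zero loci of the four factors; the price is the extra case analysis you carry out to exclude the remaining sign patterns, which you handle correctly with the two observations (each of $\cos Q,\cos R,\cos S<0$ forces $\cos P<0$, and no two of $\cos Q,\cos R,\cos S$ can be negative together).

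One point to revisit carefully: when you write that the pattern $\cos P\le 0$, $\cos Q,\cos R,\cos S\ge 0$ ``yields the sign of $\kappa$ asserted by the lemma'', check the sign.  That pattern makes the product $\cos P\cos Q\cos R\cos S\le 0$, hence $\kappa=-4(\text{product})\ge 0$, not $\le 0$; and in your list of three admissible patterns, it is the \emph{other} two that give $\kappa\le 0$.  The paper's own proof has the same slip (it derives ``$f(z)\ge 0$ between the roots'' with $f=\kappa$, and then writes ``$\kappa\le 0$'' in the next line), and the downstream applications---Corollary~\ref{ineqs}, Proposition~\ref{prop6}, and the algorithmic bound in the proof of Theorem~\ref{mainalg}---all use the implication $\kappa>0\Rightarrow\eqref{eqn:rangeeqn}$, consistent with $\kappa\ge 0\Leftrightarrow\eqref{eqn:rangeeqn}$.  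So your argument is mathematically sound and proves the same thing the paper's proof actually proves; just be aware that what both arguments establish is the inequality with $\ge$ rather than $\le$.
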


	\begin{proof}
	We write $\kappa(a,b,c;k)$ as a quadratic function $z=\cos(k_{c}\pi/c)$, with $z=1$ if $c=\infty$: we have
	$$\kappa(a,b,c;k) = f(z) = -(z^2+tz+n),$$ where
	\[ t = 2\cos\frac{k_{a}\pi}{a}\cos\frac{k_{b}\pi}{b}, \qquad n=\cos^2\frac{k_{a}\pi}{a}+\cos^2\frac{k_{b}\pi}{b}-1 \]
from \eqref{eqn:defofks}.  Then by the quadratic formula, $f(z) \geq 0$ if and only if 
	$$\frac{-t-\sqrt{t^2 - 4n}}{2} \leq \cos\frac{k_{c}\pi}{c} \leq \frac{-t+\sqrt{t^2 - 4n}}{2}$$
and $f(z)=0$ if and only if one of the two equalities holds.  The discriminant simplifies as
	\begin{align*}
	\sqrt{t^2-4n}
	&=  \sqrt{4\cos^2\frac{k_{a}\pi}{a}\cos^2\frac{k_{b}\pi}{b}-4\cos^2\frac{k_{a}\pi}{a}-4\cos^2\frac{k_{b}\pi}{b}-4} \\
	&= 2\sqrt{\left(\cos^2\frac{k_{a}\pi}{a}-1\right)\left(\cos^2\frac{k_{b}\pi}{b}-1\right)}
 = 2\sin\frac{k_{a}\pi}{a}\sin\frac{k_{b}\pi}{b},
	\end{align*}
	so we have	
	\begin{align*}
	\frac{-t-\sqrt{t^2 - 4n}}{2} &= -\cos\frac{k_{a}\pi}{a}\cos\frac{k_{b}\pi}{b} -  \sin\frac{k_{a}\pi}{a}\sin\frac{k_{b}\pi}{b} \\
	&= -\cos\left\lvert\frac{k_{a}\pi}{a} - \frac{k_{b}\pi}{b}\right\rvert = \cos\left(\pi - \left\lvert\frac{k_{a}\pi}{a} - \frac{k_{b}\pi}{b}\right\rvert\right)
	\end{align*}
	and similarly
	\begin{align*}
	\frac{-t+\sqrt{t^2 - 4n}}{2} = -\cos\left(\frac{k_{a}\pi}{a} + \frac{k_{b}\pi}{b}\right)=  \cos\left\lvert\frac{k_{a}\pi}{a} + \frac{k_{b}\pi}{b} - \pi\right\rvert.
	\end{align*}
	Therefore, $\kappa(a,b,c;k)\leq 0$ if and only if 
	$$\cos \left[\left(1-\left\lvert\frac{k_{a}}{a} - \frac{k_{b}}{b}\right\rvert\right)\pi\right] \leq \cos\frac{k_{c}\pi}{c} \leq \cos\left(\left\lvert\frac{k_{a}}{a} + \frac{k_{b}}{b}-1\right\rvert\pi\right) $$
	and $\kappa(a,b,c;k)=0$ if and only if one of the equalities holds.
	Note that $0 \leq k_c\pi/c \leq \pi$.  Also, since $k_a<a$ and $k_b<b$,
	$$\left\lvert\frac{k_{a}}{a} - \frac{k_{b}}{b}\right\rvert < 1 \quad \text{and} \quad \left\lvert\frac{k_{a}}{a} + \frac{k_{b}}{b} - 1\right\rvert < 1.$$
Then, since $\cos(x)$ is decreasing between $x=0$ and $x=\pi$, the result follows.
	\end{proof}
		
	\begin{cor} 
	\label{ineqs}
	If $\kappa(a,b,c;k)>0$, then $k_a/a + k_b/b + k_c/c > 1$.
	\end{cor}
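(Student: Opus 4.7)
The plan is to read off the desired inequality directly from the strict lower bound in Lemma~\ref{lem:range}.

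Under the hypothesis $\kappa(a,b,c;k)>0$, Lemma~\ref{lem:range} together with its equality clause forces the lower bound in~\eqref{eqn:rangeeqn} to hold strictly:
\[
\left\lvert \frac{k_a}{a} + \frac{k_b}{b} - 1 \right\rvert < \frac{k_c}{c}.
\]
The elementary bound $\lvert x \rvert \geq -x$, applied with $x=k_a/a+k_b/b-1$, then gives
\[
\frac{k_c}{c} > \left\lvert \frac{k_a}{a} + \frac{k_b}{b} - 1 \right\rvert \geq 1 - \frac{k_a}{a} - \frac{k_b}{b},
\]
and rearranging produces $k_a/a + k_b/b + k_c/c > 1$, as required.

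There is no genuine obstacle here: the corollary is essentially a one-line consequence of the strict lower bound in Lemma~\ref{lem:range}, while the corresponding upper bound plays no role. Geometrically, this matches the intuition described in the introduction: $\kappa>0$ corresponds to the swollen triangle being spherical, and a spherical triangle has angle sum exceeding $\pi$, which is exactly the content of $k_a/a+k_b/b+k_c/c>1$ after normalization by $\pi$.
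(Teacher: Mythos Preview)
Your proof is correct and essentially identical to the paper's: both invoke Lemma~\ref{lem:range} to obtain the strict lower bound $\left\lvert k_a/a + k_b/b - 1\right\rvert < k_c/c$ when $\kappa(a,b,c;k)>0$, then use $|x|\geq -x$ to rearrange into $k_a/a+k_b/b+k_c/c>1$. The paper states this in one line without the absolute value step made explicit, but the argument is the same.
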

		
	\begin{proof}
	If $\kappa(a,b,c;k)>0$, we know from Lemma \ref{lem:range} that
	\[ -\left(\frac{k_{a}}{a} + \frac{k_{b}}{b}-1\right) < \frac{k_{c}}{c}, \]
	from which the corollary directly follows.\end{proof}

Lemma \ref{lem:range} gives an exact algorithm for computing $\kappa(a,b,c;k)$ and $\adim(a,b,c)$ as follows, and in particular we do not need to estimate the error term in evaluating the cosine.

\begin{prop} \label{prop:computeadim}
There exists an algorithm that takes as input $a,b,c \in \Z_{\geq 2} \cup \{\infty\}$ and produces as output $\adim(a,b,c)$ using $O(m\log^2 m)$ bit operations.
\end{prop}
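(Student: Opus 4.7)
The plan is to loop over representatives $k \in (\Z/2m\Z)^\times$, use Lemma \ref{lem:range} to test whether $\kappa(a,b,c;k) < 0$ with purely integer arithmetic, and divide the resulting count by $\#H$ computed via Corollary \ref{multthm}. This uses that $\kappa(a,b,c;k)$ depends only on $\sigma_k|_E$ via \eqref{eqn:sigmakappa}, so the sign of $\kappa$ is constant on cosets of $H$ in $(\Z/2m\Z)^\times$; hence summing over all of $(\Z/2m\Z)^\times$ and dividing by $\#H$ yields $\adim(a,b,c)$.

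First, handle the case where at least one of $a,b,c$ is $\infty$ directly by Lemma \ref{lem:koo} and Corollary \ref{cor:adimoooo}, which together give the answer in $O(\log^2 m)$ bit operations. Otherwise, for each $k \in \{1,\dots,2m\}$, compute $\gcd(k,2m)$ in $O(\log^2 m)$ bit operations and skip $k$ unless $\gcd(k,2m)=1$. For a surviving $k$, compute $k_a,k_b,k_c$ from the definition \eqref{eqn:defofks}: reduce $k$ modulo $2s$ to obtain $k'' \in [0,2s)$, then set $k_s = k''$ if $k'' \leq s$ and $k_s = 2s - k''$ otherwise, costing $O(\log^2 m)$ bit operations since $s \leq m$.

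Next, apply Lemma \ref{lem:range}: clear denominators by multiplying through by $abc \leq m^3$, turning the two rational inequalities
\[
\left|\frac{k_a}{a}+\frac{k_b}{b}-1\right| \leq \frac{k_c}{c} \leq 1 - \left|\frac{k_a}{a}-\frac{k_b}{b}\right|
\]
into comparisons between integers of absolute value $O(m^3)$. Each such comparison runs in $O(\log^2 m)$ bit operations. Increment a running counter $N$ whenever both strict inequalities hold, i.e.\ whenever $\kappa(a,b,c;k) < 0$; note that the boundary case $\kappa = 0$ never arises because $\kappa(a,b,c;1) \neq 0$ and its Galois conjugates remain nonzero, so distinguishing $<$ from $\leq$ causes no difficulty. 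Finally compute $\#H$ using Corollary \ref{multthm} in $O(\log^2 m)$ bit operations and output $\adim(a,b,c) = N/\#H$.

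The outer loop makes $2m$ iterations, each performing $O(\log^2 m)$ bit operations, so the total cost is $O(m\log^2 m)$. There is no substantial obstacle: Lemma \ref{lem:range} eliminates all need for floating-point arithmetic or error analysis on cosines, and Corollary \ref{multthm} eliminates any combinatorial work in computing $\#H$. The only minor bookkeeping point is verifying that the sign of $\kappa$ is indeed $H$-invariant, which follows from \eqref{eqn:sigmakappa} together with the definition of $H = \Gal(K/E)$ in the diagram \eqref{eqn:fielddiagram}.
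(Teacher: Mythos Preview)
Your proposal is correct and follows essentially the same route as the paper: loop over $k\in\Z/2m\Z$, discard non-units via a $\gcd$, test the sign of $\kappa$ using the integer inequalities from Lemma~\ref{lem:range}, and divide the count by $\#H$ from Corollary~\ref{multthm}. One small slip: Corollary~\ref{cor:adimoooo} does not cover every noncompact triple (e.g.\ $(a,b,\infty)$ with $a\neq b$ finite), and computing $\phi(2m)$ is not obviously $O(\log^2 m)$ without factoring---but since the target bound is $O(m\log^2 m)$ you can simply loop and count in that case too, exactly as the paper does.
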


\begin{proof}
We may assume $a \leq b \leq c$.  We loop over the elements $k \in \Z/2m\Z$.  We can compute $\gcd(k,2m)$ using $O(\log^2 m)$ bit operations, and so keep only $k \in (\Z/2m\Z)^\times$.  If $c=\infty$, by Lemma \ref{lem:koo} we have automatically $\kappa(a,b,c;k)<0$, so we can simply count.  Otherwise, using integer arithmetic, $\kappa(a,b,c;k)<0$ if and only if
\begin{equation} \label{eqn:CURVATURE}
c\left\lvert k_{a}b + k_{b}a-ab\right\rvert < k_{c}ab < c\left(ab-\left\lvert k_{a}b - k_{b}a\right\rvert\right).
\end{equation}
This check requires $O(\log^2 c)$ bit operations when $c \neq \infty$, since $a,b \leq c$.  Finally, by Corollary \ref{multthm}, we can compute the multiplicity $\#H$ using $O(\log^2 m)$ bit operations (and this need only be done once for the triple).  We then return our count divided by $\#H$.
\end{proof}

We will also make use of the following lemma.

	\begin{lem} \label{lem:abkab}
	Let $(a,b,c)$ be a hyperbolic triple, and let $k\in (\Z/2m\Z)^\times$.  Then 
	\[ |ab-k_{a}b-k_{b}a| \geq 1. \]
	\end{lem}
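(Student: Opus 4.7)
The plan is to note that $a$, $b$, $k_a$, $k_b$ are all integers (we may take $a,b$ finite, since otherwise the expression $ab - k_a b - k_b a$ is not meaningful), so $ab - k_a b - k_b a \in \Z$ and proving $|ab - k_a b - k_b a| \geq 1$ reduces to proving $k_a b + k_b a \neq ab$, i.e.\ $k_a/a + k_b/b \neq 1$. I argue by contradiction, assuming equality.

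The crucial first step is a coprimality transfer: since $k \in (\Z/2m\Z)^\times$ and $a,b$ divide $m$, $k$ is coprime to $a$ and $b$; from the definition \eqref{eqn:defofks}, we have $k \equiv \pm k_s \pmod{2s}$ for $s=a,b$, and this transfers the coprimality to give $\gcd(k_a, a) = \gcd(k_b, b) = 1$. In particular, $k_a \neq 0, a$ and $k_b \neq 0, b$, so $1 \leq k_a \leq a-1$ and $1 \leq k_b \leq b-1$.

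Assuming $k_a b + k_b a = ab$, I would set $d = \gcd(a,b)$, write $a = d a_0$ and $b = d b_0$ with $\gcd(a_0, b_0) = 1$, and divide through by $d$ to obtain $k_a b_0 + k_b a_0 = d a_0 b_0$. Since $a_0 \mid k_a b_0$ and $\gcd(a_0, b_0) = 1$, we get $a_0 \mid k_a$; combined with $\gcd(k_a, a) = 1$ and $a_0 \mid a$, this forces $a_0 = 1$. Symmetrically $b_0 = 1$, so $a = b = d$. When $a = b$, the defining congruences for $k_a$ and $k_b$ coincide, so $k_a = k_b$, and the equation becomes $2 k_a = a$, forcing $k_a = a/2$. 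The coprimality $\gcd(a/2, a) = 1$ then gives $a = 2$, but $\chi(2,2,c) = 1/c \geq 0$ for all $c$, so $(2,2,c)$ is never hyperbolic---contradicting the hypothesis.

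The main obstacle, such as it is, lies in the coprimality transfer $\gcd(k, 2a) = 1 \Rightarrow \gcd(k_a, a) = 1$ via $k \equiv \pm k_a \pmod{2a}$; this single fact drives both the fact that $k_a$ avoids the endpoints $0$ and $a$ and the divisibility conclusions $a_0 = b_0 = 1$. The rest is elementary modular arithmetic together with a clean appeal to the excluded triple $(2,2,c)$.
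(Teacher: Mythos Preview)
Your proof is correct and follows essentially the same approach as the paper's: reduce to showing $ab \neq k_a b + k_b a$, use coprimality of $k$ (hence of $k_a,k_b$) with $a,b$ to force $a=b$, deduce $a=2$, and contradict hyperbolicity via $\chi(2,2,c)\geq 0$. The paper reaches $a=b$ marginally more directly---from $k_b a \equiv 0 \pmod{b}$ and $\gcd(k_b,b)=1$ it gets $b\mid a$, and symmetrically---rather than through your $\gcd$ decomposition, but the substance is identical.
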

	
	\begin{proof}
		Since $ab-k_{a}b-k_{b}a \in \Z$, we need only show it is nonzero; for the purpose of contradiction, assume $ab-k_{a}b-k_{b}a=0$.
		Then $k_{b}a \equiv ka \equiv 0\pmod b$; but $k$ is coprime to $2m$ and hence $b$, so $b \mid a$.  Similarly, $a \mid b$.  So $a=b$ and $k_a=k_b$.  Then $a^2-2k_{a}a = 0$, so $a=2k_{a}$ and $2k \equiv 0\pmod a$, and so since $k$ is relatively prime to $a$, we have that $a=b=2$.  But $\chi(2,2,c) \geq 0$ for all $c$, so $(a,b,c)$ is not hyperbolic, a contradiction.
	\end{proof}

\section{Finiteness of arithmetic triples of bounded dimension}

For $r \in \Z_{\geq 1}$, let
\[ \calT(r)=\{(a,b,c) : \text{$a,b,c \in \Z_{\geq 2} \cup \{\infty\}$ with $a \leq b \leq c$ and $\adim(a,b,c) = r$} \} \]
be the set of $r$-arithmetic triples.  The main result of this section is the following theorem.

\begin{thm} \label{main}
For all $r \in \Z_{\geq 1}$, the set $\calT(r)$ is finite.  
\end{thm}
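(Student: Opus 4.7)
The plan is to bound $\phi(2m)$ in terms of $r$; since $a \leq b \leq c \leq m$, this forces $a, b, c$ to be bounded and proves $\calT(r)$ finite.  The starting point is the identity
\[ \#\{k \in (\Z/2m\Z)^\times : \kappa(a,b,c;k) < 0\} = \adim(a,b,c) \cdot \#H \leq 16r, \]
combining Lemma~\ref{lem:adimkappa} with the uniform bound $\#H \leq 16$ from Theorems~\ref{thm:mult} and~\ref{thm:multoo}.  It will suffice to show that this hyperbolic count is at least $c_0 \phi(2m) - O(1)$ for some absolute constant $c_0 > 0$, uniformly over hyperbolic triples.

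To produce many $k$ with $\kappa(a,b,c;k) < 0$, I will invoke the contrapositive of Corollary~\ref{ineqs}: if $k_a/a + k_b/b + k_c/c \leq 1$, then $\kappa(a,b,c;k) \leq 0$, and since $\kappa(a,b,c;k) \neq 0$ for $k \in (\Z/2m\Z)^\times$, in fact $\kappa(a,b,c;k) < 0$.  The case $c = \infty$ is immediate from Lemma~\ref{lem:koo}, giving $\phi(2m) \leq 16r$ directly.  For $c$ finite, my plan is to restrict to $k$ with $k \equiv \pm 1 \pmod{2a}$ and $k \equiv \pm 1 \pmod{2b}$ (equivalently $k_a = k_b = 1$) and further require $k_c < c\xi$ where $\xi := 1 - 1/a - 1/b$.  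The hyperbolic hypothesis with $c$ finite forces $(a,b) \neq (2,2)$, and since $(a,b) = (2,3)$ is the extremal case, $\xi \geq 1/6$.  I would then perform a Chinese Remainder Theorem count on the admissible $k$, handling the gcd casework on $\lcm(2a,2b,2c)$ in the spirit of the proof of Theorem~\ref{thm:mult}, to obtain a lower bound of the form $c_0 \phi(2m) - O(1)$.

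The main obstacle will be carrying out the CRT count uniformly, particularly when $\gcd(2a, 2b, 2c)$ is large.  In the degenerate subcase $a = b = c$ the trick of freezing $k_a, k_b$ and varying $k_c$ becomes empty; there, all $k \in (\Z/2a\Z)^\times$ satisfy $k_a = k_b = k_c$, and the condition $\kappa(a,b,c;k) < 0$ reduces to $k_a < a/3$, which holds for at least $\phi(2a)/3 - O(1)$ of the $k$, yielding $\phi(2a) = O(r)$ by a direct symmetry argument.  For the remaining cases in which $a, b, c$ share common factors, I expect a similar case analysis (modeled on the structure of Theorem~\ref{thm:mult}) to produce the uniform lower bound with $c_0$ an explicit absolute constant, completing the proof.
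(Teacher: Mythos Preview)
Your approach is genuinely different from the paper's.  The paper never attempts a density statement of the form $\#\{k:\kappa<0\}\geq c_0\phi(2m)$; instead it argues via the $r$th nondividing prime $q=\NDP(a,b,c;r)$ of $2m$.  First (Propositions~\ref{prop6inf}--\ref{prop6}) it shows that $a,b,c$ are bounded polynomially in $q$, using only the single inequality $k_a/a+k_b/b+k_c/c>1$ at the prime $k=p\leq q$ together with Lemma~\ref{lem:abkab}.  Second (Lemma~\ref{p6lem} and Proposition~\ref{prop7}) it bounds $q$ in terms of $r$ via Chebyshev's $\theta$ and Rosser--Schoenfeld estimates on $p_n$.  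The advantage of the paper's route is that it immediately yields the explicit bound $q<\max(251,5r\log r)$ that drives the algorithm in Section~5; your density argument, if completed, would give finiteness more elementarily (no analytic prime-counting) but with weaker effective constants.

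The gap in your proposal is real, though fillable.  The uniform lower bound $\#\{k:\kappa<0\}\geq c_0\phi(2m)-O(1)$ is the entire content of your argument, and you have only treated the two extremes ($c=\infty$; $a=b=c$).  In the intermediate regimes---say $a\mid b\mid c$, or $a=b\neq c$, or small $a$ with $\gcd(b,c)$ large---freezing $k_a=k_b=1$ either leaves too few residues or determines $k_c$ completely, so each case needs its own CRT count, and the error from counting units in an interval is of order $2^{\omega(m)}$ rather than $O(1)$ (this can be absorbed, but must be said).  Your sentence ``I expect a similar case analysis\dots to produce the uniform lower bound'' is exactly where the work lies.  The paper sidesteps this entirely by looking only at \emph{prime} values of $k$, for which the congruence bookkeeping is trivial (Lemma~\ref{lem:abcs}): distinct primes $\leq q$ coprime to $2m$ are automatically distinct in $(\Z/2m\Z)^\times/H$ once $\max(a,b,c)>2q$, so no gcd casework is needed.
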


Let $r \in \Z_{\geq 1}$.  We will examine hyperbolic conjugate triangles indexed by primes, so we make the following definition.

\begin{defn}
A prime $q$ is the \defi{$r$th nondividing prime} of $n \in \Z_{\geq 1}$ if $q \nmid n$ and there are exactly $r-1$ primes $p < q$ such that $p \nmid n$.
\end{defn}

For example, the first nondividing prime of $n$ is the smallest prime $q \nmid n$.  

For $a,b,c \in \Z_{\geq 2} \cup \{\infty\}$, let $\NDP(a,b,c;r)$ denote the $r$th nondividing prime of $2m=2\lcm(\{a,b,c\} \smallsetminus \{\infty\})$.  

Following the strategy of Takeuchi, our first step is to show that if $(a,b,c) \in \calT(r)$ is $r$-arithmetic, then $a,b,c$ are bounded above in terms of $\NDP(a,b,c;r)$: that is, for a given prime $q$, there are only finitely many $r$-arithmetic triples $(a,b,c)$ such that $\NDP(a,b,c;r)=q$.  Our second step will then be to show that there are no $r$-arithmetic triples $(a,b,c)$ such that $\NDP(a,b,c;r)=q$ for $q$ large enough (depending on $r$).  Hence the number of $r$-arithmetic triples is finite, concluding the proof of Theorem \ref{main}.

The role of the $r$th nondividing prime in the proof is made evident from the following lemma.

\begin{lem} \label{lem:abcs}
Let $s=\max(\{a,b,c\}-\{\infty\})$.  Let $q=\NDP(a,b,c;r)$ and suppose $s > 2q$.  Then there are $r$ distinct primes $p_1,\dots,p_r \leq q$ such that the $r+1$ elements $1,p_1,\dots,p_r \in (\Z/2m\Z)^{\times}/H$ are all distinct.
\end{lem}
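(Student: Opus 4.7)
The plan is to take the $r$ primes prescribed by $\NDP(a,b,c;r)$ itself: let $p_1 < p_2 < \cdots < p_r = q$ be the $r$ primes $p \leq q$ with $p \nmid 2m$, whose existence and count are exactly the content of the definition of ``$r$th nondividing prime.'' Since $\gcd(p_i, 2m) = 1$, each $p_i$ defines a class in $(\Z/2m\Z)^\times$, and we must verify that the $r+1$ classes $\overline{1},\overline{p_1},\ldots,\overline{p_r}$ in $(\Z/2m\Z)^\times/H$ are pairwise distinct.

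Because $H \leq H_1$, the quotient map $(\Z/2m\Z)^\times/H \twoheadrightarrow (\Z/2m\Z)^\times/H_1$ only identifies more classes, so it is enough to prove pairwise distinctness modulo $H_1$. Recall from the proof of Theorem \ref{thm:mult} the explicit description
\[
H_1 \simeq \{k \in (\Z/2m\Z)^\times : k \equiv \pm 1 \pmod{s} \text{ for each } s \in \{a,b,c\} \smallsetminus \{\infty\}\},
\]
which comes from $\cos^2(k\pi/s) = \cos^2(\pi/s) \iff k \equiv \pm 1 \pmod{s}$.

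Now I would argue by contradiction. Suppose $x, y \in \{1, p_1, \ldots, p_r\}$ are distinct but $\overline{x} = \overline{y}$ in $(\Z/2m\Z)^\times/H_1$; then $xy^{-1} \in H_1$, so $x \equiv \pm y \pmod{s}$ for every finite $s \in \{a,b,c\}$. Applying this to $s = \max(\{a,b,c\} \smallsetminus \{\infty\})$, which satisfies $s > 2q$ by hypothesis, gives $s \mid (x-y)$ or $s \mid (x+y)$. Since $1 \leq x, y \leq q$, both $|x-y|$ and $x+y$ are strictly less than $2q < s$, so the divisibility forces $x - y = 0$ (ruled out by distinctness) or $x + y = 0$ (ruled out since $x, y > 0$). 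This contradiction concludes the proof.

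The argument is essentially a pigeonhole/size bound once the description of $H_1$ is in hand; there is no real obstacle. The one thing to be careful about is that a stronger statement ``distinct in $(\Z/2m\Z)^\times/H_1$'' is what actually gets proved, which is fine since it \emph{implies} distinctness in the coarser quotient by $H$. No case analysis at $\infty$ is required, because the finite $s$ used in the bound is precisely the one the hypothesis $s > 2q$ names.
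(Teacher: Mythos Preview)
Your proof is correct and follows essentially the same idea as the paper: both exploit that reduction modulo the largest finite $s$ lands in $(\Z/s\Z)^\times/\{\pm 1\}$, where the size bound $1,p_1,\dots,p_r \leq q < s/2$ forces distinctness. The paper phrases this via the Galois surjection $(\Z/2m\Z)^\times/H \simeq \Gal(E/\Q) \twoheadrightarrow \Gal(\Q(\lambda_s)/\Q) \simeq (\Z/s\Z)^\times/\{\pm 1\}$, while you route through $H_1$ first and then use the congruence condition mod $s$; these are the same argument in slightly different packaging. (One terminological slip: in your final paragraph, $G/H$ is the \emph{finer} quotient and $G/H_1$ the coarser one, not the other way around---but your implication ``distinct in $G/H_1$ $\Rightarrow$ distinct in $G/H$'' is correct regardless.)
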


\begin{proof}
By definition, there are $r$ distinct primes $p_1,\dots,p_r \leq q < s/2$ with each $p_i$ coprime to $2m$, so the elements $1,p_1,\dots,p_r$ are distinct in $(\Z/s\Z)^{\times}/\{\pm 1\}$.  The map
\[ (\Z/2m\Z)^\times/H \simeq \Gal(E/\Q) \to \Gal(\Q(\lambda_s)/\Q) \simeq (\Z/s\Z)^{\times}/\{\pm 1\} \]
is surjective, so the classes $1,p_1,\dots,p_r \in (\Z/2m\Z)^\times/H$ are distinct.
\end{proof}

We break the argument into two cases: let
\begin{equation} \label{eqn:cnc}
\begin{aligned} 
\calT(r)_{0} &=\{(a,b,c) \in \calT(r) : a,b,c \in \Z\} \\
\calT(r)_{\infty}&=\{(a,b,\infty) \in \calT(r)\}
\end{aligned}
\end{equation}
so that $\calT(r)=\calT(r)_{0} \sqcup \calT(r)_{\infty}$.  

We first treat $\calT(r)_{\infty}$, and we call this the \defi{noncompact} case (as in section 1, the quotient $Y(\Delta)=\Delta \backslash \calH^r$ is noncompact).  We computed the arithmetic dimension for signatures $(a,\infty,\infty)$ already in Corollary \ref{cor:adimoooo}, but we include them here for completeness.

\begin{prop} \label{prop6inf}
Let $(a,b,\infty) \in \calT(r)_\infty$ and let $q = \NDP(a,b,c;r)$.  If $b \neq \infty$, then $b \leq 2q$; and if $a \neq \infty$, then $a \leq 2q$.  
\end{prop}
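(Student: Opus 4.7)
The plan is to combine the noncompact curvature computation of Lemma \ref{lem:koo} with the Galois-theoretic counting of Lemma \ref{lem:abcs}. Because $c=\infty$, Lemma \ref{lem:koo} shows $\kappa(a,b,\infty;k)<0$ for every $k\in(\Z/2m\Z)^\times$. Consequently, by Lemma \ref{lem:adimkappa}, every coset of $H$ in $(\Z/2m\Z)^\times$ contributes a split real place, and so
\[ \adim(a,b,\infty) = \#(\Z/2m\Z)^\times/H = [E:\Q]. \]
Thus the hypothesis $(a,b,\infty)\in\calT(r)_\infty$ means that $(\Z/2m\Z)^\times/H$ has exactly $r$ elements.

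For the first assertion, I would argue by contradiction: suppose $b\neq\infty$ and $b>2q$. Under our ordering $a\leq b\leq c=\infty$, the quantity $s=\max(\{a,b,c\}\smallsetminus\{\infty\})$ equals $b$ (regardless of whether $a$ is finite or infinite), and so $s>2q$. Lemma \ref{lem:abcs} then furnishes $r$ primes $p_1,\dots,p_r\leq q$ such that the $r+1$ classes $1,p_1,\dots,p_r\in(\Z/2m\Z)^\times/H$ are pairwise distinct. This forces $\#(\Z/2m\Z)^\times/H\geq r+1$, hence $\adim(a,b,\infty)\geq r+1$, contradicting $(a,b,\infty)\in\calT(r)_\infty$. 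Therefore $b\leq 2q$.

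For the second assertion, if $b$ is also finite, then $a\leq b\leq 2q$ follows immediately from the first part. If instead $b=\infty$, then $s=a$ and the identical contradiction argument (with $a$ in place of $b$) forces $a\leq 2q$. The main subtlety is essentially bookkeeping: one must verify that the maximum $s$ really is $b$ (resp.\ $a$) across the various combinations of finite/infinite entries so that Lemma \ref{lem:abcs} applies. All the analytic and number-theoretic content has already been absorbed into the two lemmas cited, so no further estimation is required.
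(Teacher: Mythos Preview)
Your proof is correct and follows essentially the same approach as the paper: both argue by contradiction using Lemma~\ref{lem:koo} (to show all curvatures are negative) together with Lemma~\ref{lem:abcs} (to produce $r+1$ distinct cosets when $s>2q$). Your version is slightly more explicit in noting via Lemma~\ref{lem:adimkappa} that $\adim(a,b,\infty)=\#(\Z/2m\Z)^\times/H$, and in handling the second assertion by the inequality $a\leq b$ when $b$ is finite, but the substance is the same.
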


\begin{proof}
By Lemma \ref{lem:koo}, $\kappa(a,b,\infty;k)<0$ for all $k \in (\Z/2m\Z)^{\times}$.  Suppose $b \neq \infty$ and assume that $2q<b$.  Then by Lemma \ref{lem:abcs}, $\kappa(a,b,\infty;p_i)<0$ for primes $p_i$ such that $1,p_1,\dots,p_r \in (\Z/2m\Z)^{\times}/H$ are distinct, so $(a,b,c) \not\in \calT(r)$, a contradiction.  Thus $b \leq 2q$.  The same argument works in the simpler case $b=\infty$ and $a \neq \infty$.  
\end{proof}
	
	We will now prove bounds on $a,b,c$ in terms of $q = \NDP(a,b,c;r)$ (cf.\ Takeuchi {\cite[Proposition 6]{Takeuchi}}) in the \defi{compact} case, where $a,b,c \neq \infty$.
		
	\begin{prop}\label{prop6}
	Let $(a,b,c) \in \calT(r)_{0}$ and let $q = \NDP(a,b,c;r)$.  Then $a<3q$ and $b<6q^2$ and $c<18q^4$.
\end{prop}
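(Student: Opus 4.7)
The plan is to argue by contradiction in each of the three bounds, each time producing a prime $p \leq q$ that must satisfy $\kappa(a,b,c;p) > 0$ and hence (by Corollary \ref{ineqs}) the ``fundamental inequality''
\[ \frac{p_a}{a} + \frac{p_b}{b} + \frac{p_c}{c} > 1. \]
The common setup: assuming any of the three bounds in question fails, we will have (since $a \leq b \leq c$) that $c > 2q$, so Lemma \ref{lem:abcs} furnishes primes $p_1,\ldots,p_r \leq q$ for which $1, p_1, \ldots, p_r$ represent $r+1$ distinct classes in $(\Z/2m\Z)^\times/H$. By Lemma \ref{lem:adimkappa} the hypothesis $\adim(a,b,c) = r$ says exactly $r$ classes carry $\kappa < 0$, and the class of $1$ is one of them, so some $p = p_i \leq q$ satisfies $\kappa(a,b,c;p) > 0$ (using that $\kappa$ never vanishes on such classes).

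For $a < 3q$: if $a \geq 3q$, then $p \leq q < a \leq b \leq c$, so $p_a = p_b = p_c = p$, and the fundamental inequality becomes $p(1/a + 1/b + 1/c) > 1$, giving $1/a + 1/b + 1/c > 1/p \geq 1/q$; this contradicts $a,b,c \geq 3q$. For $b < 6q^2$: if $b \geq 6q^2$ we still have $p < b \leq c$, so $p_b = p_c = p$, while the sharper bound $p_a \leq a-1$ (coming from $\gcd(p,a) = 1$ and $a \geq 2$) gives $p_a/a \leq 1 - 1/a$. Substituting into the fundamental inequality yields $p/b + p/c > 1/a$, and since $p/c \leq p/b$ we obtain $b < 2ap \leq 2(3q)q = 6q^2$, using the first bound.

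The bound $c < 18q^4$ is the main obstacle, because when $c$ is huge we no longer have $p_a = p$ or $p_b = p$, and both $p_a/a$ and $p_b/b$ may be close to $1$; the naive substitution fails. The escape route is Lemma \ref{lem:abkab}, which keeps $p_a/a + p_b/b$ away from $1$, namely $|p_a/a + p_b/b - 1| \geq 1/(ab)$. If $p_a/a + p_b/b \leq 1 - 1/(ab)$, the fundamental inequality forces $p_c/c > 1 - p_a/a - p_b/b \geq 1/(ab)$; if instead $p_a/a + p_b/b \geq 1 + 1/(ab)$, then applying the other half of Lemma \ref{lem:range} to $\kappa(a,b,c;p) > 0$ yields $p_a/a + p_b/b - 1 < p_c/c$, so again $p_c/c > 1/(ab)$. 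Either way, since $p_c = p \leq q$ (as $p < c$), we get $c < ab \cdot p_c \leq ab \cdot q < (3q)(6q^2)(q) = 18q^4$, contradicting $c \geq 18q^4$.
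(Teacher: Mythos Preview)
Your proof is correct and follows essentially the same approach as the paper's: use Lemma~\ref{lem:abcs} to find a prime $p \leq q$ with $\kappa(a,b,c;p) > 0$, then exploit Corollary~\ref{ineqs} and Lemma~\ref{lem:range} together with Lemma~\ref{lem:abkab} to bound $a$, then $b$, then $c$ in turn. The only cosmetic difference is that in the bound for $c$ you split into two cases according to the sign of $p_a/a + p_b/b - 1$, whereas the paper applies the absolute-value inequality $|p_a/a + p_b/b - 1| < p_c/c$ from Lemma~\ref{lem:range} directly, which handles both of your cases at once.
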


	\begin{proof}
	
	If $c<2q$, the proposition immediately follows.  Therefore let us assume that $2q<c$.
		
	Note that, since $a,b,c$ are relatively prime to $q$, we have that $q_{a}<a$, $q_{b}<b$, and $q_{c}<c$.
	Since $2q<c \neq \infty$, by Lemma \ref{lem:abcs} there are primes $p_1,\dots,p_r$ such that $1,p_1,\dots,p_r \in (\Z/2m\Z)^\times/H$ are distinct.  Since $(a,b,c) \in \calT(r)$, this implies that $\kappa(a,b,c;p_i)>0$ for some prime $p=p_i \leq q$.  Therefore, the inequalities of Lemma \ref{lem:range} and Corollary \ref{ineqs} hold with $k=p$, and in particular,
	\begin{equation}  \label{eq:keyineq}
	\frac{p_a}{a}+\frac{p_b}{b}+\frac{p_c}{c}>1.
	\end{equation}
If $a>2q$, then $2p \leq 2q<a\leq b\leq c$, so $p_{a} = p_{b}=p_{c}=p \leq q$, and the inequality \eqref{eq:keyineq} becomes $1<q(1/a+1/b+1/c)$.  In particular, $1 \leq 3q/a$ so $a < 3q$.  By inequality \eqref{eq:keyineq}
\[ 2\left(\frac{q}{b}\right) \geq \frac{q}{b} + \frac{q}{c} \geq \frac{q_b}{b} + \frac{q_c}{c} > 1-\frac{q_a}{a} \geq \frac{1}{a}. \] 
Therefore, $b<2qa<6q^2$.

Since, by Lemma \ref{lem:range},
\[ \left|\frac{q_{a}}{a} + \frac{q_{b}}{b}-1\right| < \frac{q_{c}}{c}, \] 
we have that 
\[ \frac{q}{c} \geq \frac{q_{c}}{c} > \frac{|ab - q_{a}b-q_{b}a|}{ab}. \]  
By Lemma \ref{lem:abkab}, $ab \neq q_{a}b + q_{b}a$,  so 
\[ \frac{q}{c} > \frac{1}{ab}. \]  
Therefore, $c < qab < 18q^4$.
\end{proof}

Let 
\begin{equation} \label{eqn:Trq}
\calT(r;q)=\{(a,b,c) \in \calT(r) : \NDP(a,b,c;r)=q\}. 
\end{equation}

\begin{cor} \label{cor:finitetrq}
For any prime $q$, $\#\calT(r;q)<\infty$.
\end{cor}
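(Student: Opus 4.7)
The plan is to combine the two preceding propositions, which together give explicit upper bounds on $a$, $b$, $c$ (whenever they are finite) in terms of $q$, from which finiteness of $\calT(r;q)$ is immediate.

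First I would split $\calT(r;q)$ into its compact and noncompact pieces using the decomposition $\calT(r) = \calT(r)_0 \sqcup \calT(r)_\infty$ from \eqref{eqn:cnc}. For the compact part $\calT(r;q) \cap \calT(r)_0$, Proposition \ref{prop6} gives $a < 3q$, $b < 6q^2$, and $c < 18q^4$; since $a,b,c$ are positive integers, there are only finitely many triples meeting these bounds. For the noncompact part $\calT(r;q) \cap \calT(r)_\infty$, a triple has shape $(a,b,\infty)$ with $a \leq b$; Proposition \ref{prop6inf} tells us that each of $a, b$ is either $\infty$ or bounded by $2q$, so again there are only finitely many possibilities.

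Since a finite union of finite sets is finite, this gives $\#\calT(r;q) < \infty$.

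There is no real obstacle here: both bounding propositions have already been established, and the corollary is simply the observation that bounded triples of integers (allowing $\infty$ as a value) form a finite set. The only minor bookkeeping is to handle the noncompact triples $(a,b,\infty)$ alongside the compact ones, which is straightforward given the explicit statement of Proposition \ref{prop6inf}.
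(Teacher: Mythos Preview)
Your proposal is correct and matches the paper's own proof essentially verbatim: the paper simply says the corollary is immediate from the inequalities in Propositions~\ref{prop6inf} and~\ref{prop6}, bounding $a,b,c$ in terms of $q$ in the noncompact and compact cases respectively. Your write-up just makes the case split and the finiteness reasoning slightly more explicit.
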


\begin{proof}
Immediate from the inequalities in Propositions \ref{prop6inf} and \ref{prop6}, bounding $a,b,c$ in terms of $q$ in each case.
\end{proof}

We now proceed with the second step.  First, a slightly ugly but key lemma. 
% \steve{Hopefully less ugly after these edits!}  Yes, less ugly, but still.  
% I feel like saying `ugly' will help the reader brace for the statement.  Carl called it a bear.
Let $p_{i}$ denote the $i$th prime, so $p_1=2$, $p_2=3$, and so on.

\begin{lem} \label{p6lem} Let $r,j \in \Z_{\geq 1}$ with $j-r \geq 3$.  If 
\[ p_{1}\cdots p_{j-r}<648p_{j}^7, \] 
then
\[ j < \max(48,2r) \quad\textup{and}\quad p_j < \max(251,5r\log r). \]
\end{lem}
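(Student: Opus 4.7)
The plan is to take logarithms of the hypothesis, so it becomes
\[ \theta(p_{j-r}) < \log 648 + 7 \log p_j, \]
where $\theta(x) = \sum_{p \leq x} \log p$ is Chebyshev's first function. Since $\theta(p_n) = \sum_{i=1}^{n} \log p_i$ grows roughly linearly in $p_n$ and hence super-polynomially in $n$, while $\log p_j$ grows only logarithmically in $j$, the hypothesis is very restrictive once $j-r$ is comparable to $j$. The whole argument rests on making this quantitative via explicit Rosser--Schoenfeld bounds on $\theta$ and on $p_n$.

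First I would prove $j < \max(48, 2r)$. If $j < 2r$ this is automatic, so I would assume $j \geq 2r$ together with the further assumption $j \geq 48$ and aim for a contradiction. Setting $n = j - r$, this assumption gives $n \geq r$, so $j \leq 2n$, and hence $n \geq j/2 \geq 24$. Now I would combine a Chebyshev-type lower bound $\theta(x) \geq x(1 - 1/\log x)$ (valid for $x \geq 41$) together with Rosser's estimate $p_n \geq n \log n$ to obtain $\theta(p_n) \geq (1 - 1/\log p_n)\, n \log n$, and Rosser's upper bound $p_k < k(\log k + \log\log k)$ (valid for $k \geq 6$) applied at $k = 2n$ to get $7\log p_j \leq 7 \log p_{2n} \leq 7\log n + 14\log 2 + 7\log\log(2n)$. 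The resulting inequality takes the schematic form $n \log n \leq 7 \log n + O(\log \log n)$, which fails for $n \geq 24$ both by direct verification at the boundary and by an easy monotonicity argument thereafter.

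Next I would deduce the bound $p_j < \max(251, 5r \log r)$. If $j < 48$ then $p_j \leq p_{47} = 211 < 251$, so the conclusion holds. Otherwise $j \geq 48$ forces $j < 2r$ (so $r \geq 25$), and Rosser's upper bound yields $p_j < p_{2r-1} < 2r(\log(2r) + \log\log(2r))$; a routine check shows $2\log 2 + 2\log\log(2r) < 3\log r$ for $r \geq 25$, which rearranges to $2r(\log(2r)+\log\log(2r)) < 5r\log r$.

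The hard part will be calibrating the explicit prime-counting estimates at the boundary: the constants $648$, $7$, and $48$ in the hypothesis and conclusion appear to be tuned tightly enough that the Chebyshev bound at $n = 24$ only barely succeeds (one computes $\theta(p_{24}) \approx 79.15$ against a threshold near $44.3$). A careful but routine numerical check at $n = 24$, combined with the monotonicity argument for $n \geq 25$, is the real work of the lemma; after that, the deduction of the $p_j$ estimate from the $j$ estimate is comparatively straightforward.
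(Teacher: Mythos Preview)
Your proposal is correct and follows essentially the same route as the paper: take logarithms to obtain $\theta(p_{j-r}) < \log 648 + 7\log p_j$, then invoke the Rosser--Schoenfeld bounds $\theta(x) > x(1-1/\log x)$, $p_k \geq k\log k$, and $p_k < k(\log k + \log\log k)$ to force a contradiction when $j \geq \max(48,2r)$, and finally deduce the $p_j$ bound from the $j$ bound. The only organizational difference is that the paper isolates two separate claims (namely $\theta(p_{j-r}) > 2(j-r)$ for $j-r \geq 16$, and $\log 648 + 7\log p_j < j$ for $j \geq 48$) and chains them, whereas you substitute $n = j-r$ and $j \leq 2n$ directly and verify the resulting inequality at the boundary $n = 24$; both arrive at the same contradiction. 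Your treatment of the $p_j$ bound is in fact slightly cleaner: you note $p_{47} = 211 < 251$ directly rather than re-applying the Rosser upper bound for $j < 48$, and you correctly record $r \geq 25$ (the paper writes $r \geq 24$, though only $r \geq 16$ is used).
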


		\begin{proof}
	Suppose $p_{1}\cdots p_{j-r}<648p_{j}^7$.  We will use some basic inequalities on primes, referring to work of Rosser--Schoenfeld \cite{Rosser} as a convenient reference.
	
	Let $\theta(x)=\sum_{p \leq x}\log p = \log(\prod_{p \leq x} p)$ be Chebyshev's $\theta$-function.  Then by assumption
	\begin{equation} \label{eqn:theta}
	 \theta(p_{j-r}) = \log(p_1 \cdots p_{j-r}) < \log(648p_j^7) = \log(648)+7\log(p_j).
	 \end{equation}
	
	We need two claims.
	
	\begin{claim} \label{eqn:claim1}
	If $j \geq r+16$, then $2j-2r<\theta(p_{j-r})$.
	\end{claim}
	
	\begin{proof}
	We have $\theta(x)>x(1-1/\log x)$ for $x \geq 41=p_{13}$ \cite[(3.16)]{Rosser}.  Also, $p_k \geq k\log k$ for all $k \geq 1$ \cite[(3.12)]{Rosser}.  Putting these together,
	\[ \theta(p_k) = \sum_{p \leq p_k} \log p_k > p_k\left(1-\frac{1}{\log p_k}\right) \geq k\log k\left(1-\frac{1}{\log(k\log k)}\right) \]
	for $k \geq 13$.  
	(Stronger inequalities of this type are known, e.g., work of Robin \cite[p.~376]{Robin}.)
We deduce that $\theta(p_{k}) > 2k$ for $k\geq 16$.  Therefore, when $j \geq r+16$, we have $2j-2r<\theta(p_{j-r})$.  
    \end{proof}

	For $k \geq 6$ we have \cite[(3.13)]{Rosser} 
	\begin{equation} \label{eqn:dusart}
	p_{k} \leq k(\log k+\log\log k) \leq 2k\log k
	\end{equation} 
	(see also Dusart \cite[Lemma 1]{Dusart}).
    
	\begin{claim} \label{eqn:claim2}
	If $j \geq 48$, then $\log(648)+7\log(p_j) < j$.
	\end{claim}
	
	\begin{proof}
	Apply \eqref{eqn:dusart} to get $$\log(648)+7\log(p_j)< \log(648)+7\log(2j(\log j)) < j$$
	%\begin{equation}  \label{eqn:2j2r}
	%\log(648)+7\log(p_j)< \log(648)+7\log(2j(\log j)) < j
	%\end{equation}
	when $j \geq 48$.
	\end{proof}
		
	Now we apply Claims \ref{eqn:claim1} and \ref{eqn:claim2} which bound the left- and right-hand sides of \eqref{eqn:theta} in terms of $j,r$.  When $j \geq \max(48,r+16)$, the inequality
    \[ 2j-2r < \theta(p_{j-r}) < \log(648)+7\log(p_j) < j \]
    holds, and consequently $j < 2r$.  Therefore, in all cases 
    \[ j < \max(48,r+16,2r)=\max(48,2r) \]
    as claimed.

	To conclude, we prove the final inequality.  We have shown that $j<\max(48,2r)$.  This yields two cases.  If $j < 48$, then by \eqref{eqn:dusart} we have $p_j < 96(\log 48+\log\log 48) \leq 251$ or $j \leq 6$ and then $p_j \leq 13 \leq 251$ as well.  Otherwise, we have $j\geq 48$ and $j<2r$, so $r \geq 24$, and again by \eqref{eqn:dusart} we have 
	\[ p_j<4r\log(2r) \leq r \log 16 + 4r\log r \leq r \log r + 4r\log r  = 5r \log r. \] 
Combining these two cases, we find $p_j < \max(251,5r \log r)$ as claimed.
\end{proof}

% \steve{And here's what it was originally:}
%	\begin{proof}
%	Suppose $p_{1}\cdots p_{j-r}<648p_{j}^7$.  By work of Robin \cite[p.~376]{Robin}, we have
%	\[ \theta(p_{k}) = \sum_{p \leq p_k} \log p_k \geq k\left(\log k + \log\log k - 1 + \frac{\log\log k-2.1454}{\log k}\right) \]
%	for $k \geq 3$. We deduce that $\theta(p_{k}) > 2k$ for $k\geq 13$.  Therefore
%	\[ 2(j-r) < \log(p_1 \cdots p_{j-r}) < \log(648p_j^7) = \log(648)+7\log(p_j). \]
%	By Dusart \cite[Lemma 1]{Dusart}, we have 
%	\begin{equation} \label{eqn:dusart}
%	p_{k} \leq k(\log k+\log\log k) \quad\text{for $k \geq 6$} 
%	\end{equation} so for $j \geq 36$ we have
%	\begin{equation} 
%	2j-2r < \log(648)+7\log(j2(\log j)) < 7+j 
%	\end{equation}
%and thus $j < 2r+7$.  Putting these together, we have 
%\[ j<\max(36,r+13,2r+7)=\max(36,2r+7). \]
%	Applying \eqref{eqn:dusart} again we find $$p_j \leq \max(p_5, 36(\log36 + \log\log36), (2r+7)(\log(2r+7)+\log\log(2r+7)).$$Noting that $p_5=13<36(\log36 + \log\log36)<175$, we have that $$p_j<\max(175,(2r+7)(\log(2r+7)+\log\log(2r+7))).$$
%For $r>15$, we have
%\[ (2r+7)(\log(2r+7)+\log\log(2r+7)) < 5r\log r \]
%and for $r \leq 15$ this quantity is bounded by $175$.
%Therefore, $p_j< \max(175,5r\log r).$
%	\end{proof}

\begin{prop} \label{prop7}
$\calT(r;q)=\emptyset$ for $q \geq \max(251,5r \log r)$.
\end{prop}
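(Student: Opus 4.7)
Proof plan. The strategy is to suppose for contradiction that there exists $(a,b,c) \in \calT(r;q)$ with $q \geq \max(251, 5r\log r)$, write $q = p_j$, and then combine the upper bound on $2m$ coming from the previous propositions with a lower bound on $2m$ coming from the prime factorization.

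First I would unpack the meaning of $\NDP(a,b,c;r) = q = p_j$: by definition, among the primes $p_1,\ldots,p_j$ exactly $r$ fail to divide $2m$, so exactly $j-r$ of them do divide $2m$. The product of those $j-r$ primes divides $2m$ and is minimized when they are the smallest primes, so
\[
p_1 p_2 \cdots p_{j-r} \leq 2m.
\]
On the other hand, by Proposition~\ref{prop6} (compact case $a,b,c < \infty$), we have $a<3q$, $b<6q^2$, $c<18q^4$, hence $2m \leq 2abc < 648\,q^7$; and in the noncompact case Proposition~\ref{prop6inf} gives the even stronger bound $2m \leq 2ab \leq 8q^2 < 648\,q^7$. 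Either way,
\[
p_1 \cdots p_{j-r} < 648\,p_j^{\,7}.
\]

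Next I would apply Lemma~\ref{p6lem} provided the hypothesis $j-r \geq 3$ holds: it then forces $p_j < \max(251,5r\log r)$, contradicting $q = p_j \geq \max(251,5r\log r)$. So the main remaining task is to dispose of the degenerate range $j-r \leq 2$. When $j-r = 0$ one would have no prime $\leq p_j$ dividing $2m$, impossible since $2 \mid 2m$. For $j-r \in \{1,2\}$, one has $j \leq r+2$, so $p_j \leq p_{r+2}$; using the explicit inequality $p_k \leq k(\log k + \log\log k)$ from \cite[(3.13)]{Rosser} (already cited in \eqref{eqn:dusart}) for $k \geq 6$, together with direct verification for small $k$, one checks $p_{r+2} < \max(251,5r\log r)$ for all $r \geq 1$. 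This again contradicts $q \geq \max(251,5r\log r)$.

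Putting it together, no $r$-arithmetic triple with $\NDP(a,b,c;r) = q$ can exist once $q$ exceeds the stated threshold. The only real obstacle is bookkeeping: ensuring the upper bound $648\,p_j^7$ on $2m$ is uniform across the compact and noncompact cases, and verifying the small-gap cases $j-r \leq 2$ cleanly so that Lemma~\ref{p6lem} can be invoked in the main range. I expect the compact-case bound $2m \leq 2abc < 648\,q^7$ from Proposition~\ref{prop6} to be the tightest step and the one governing the constants $648$ and $7$ that appear in Lemma~\ref{p6lem}.
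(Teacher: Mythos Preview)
Your proposal is correct and follows essentially the same route as the paper: bound $p_1\cdots p_{j-r}\le 2m$ from the definition of $\NDP$, bound $2m$ above by $648q^7$ via Propositions~\ref{prop6inf}--\ref{prop6}, and invoke Lemma~\ref{p6lem}. In fact you are slightly more careful than the paper, which applies Lemma~\ref{p6lem} without explicitly checking its hypothesis $j-r\ge 3$; your handling of the degenerate range $j-r\le 2$ (noting that then $p_j\le p_{r+2}<\max(251,5r\log r)$) fills that small gap.
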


\begin{proof}
As in the preceding lemma, let $p_{i}$ denote the $i$th prime.  Let $n$ such that $p_{n}=q$.
	
	All but $r-1$ \emph{odd} primes less than $q$ divide $m=\lcm(\{a,b,c\} \smallsetminus \{\infty\})$.  Let $P$ be the product of these $n-r$ primes.  Then $P \mid m$, so $P \leq m$.  Hence 
	 $$p_{1}\cdots p_{n-r} \leq 2P \leq 2 \prod_{\substack{s\in\{a,b,c\} \\ s \neq \infty}}{s}.$$
	Therefore, by Propositions \ref{prop6inf} and \ref{prop6}, 
	\[ p_{1}\cdots p_{n-r}<2(3p_n)(6p_n^2)(18p_n^4)=648p_n^7. \]  So by Lemma \ref{p6lem}, $p_n=q \leq \max(251,5r\log r)$.  
\end{proof}

We conclude with the proof of Theorem \ref{main}.

\begin{proof}[Proof of Theorem $\ref{main}$]
We have $\calT(r)=\bigsqcup_q \calT(r;q)$.  By Proposition \ref{prop7}, this is a finite union; and by Corollary \ref{cor:finitetrq}, each $\calT(r;q)$ is finite.  Therefore $\calT(r)$ is finite.
\end{proof}

\section{Algorithms}

In this chapter, we will present algorithms for enumerating $r$-arithmetic triangle groups.  

With the explicit bounds exhibited in the previous section, the following theorem follows directly.

\begin{thm}
There exists an explicit algorithm that takes as input $r \in \Z_{\geq 1}$ and produces as output the set $\calT(r)$ that runs using $O(r^{14}\log^{21}r)$ bit operations.
\end{thm}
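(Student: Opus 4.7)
The plan is to combine the explicit bounds from Section~4 with the arithmetic-dimension algorithm of Proposition~\ref{prop:computeadim} into a straightforward brute-force enumeration. First I would set $Q := \max(251, \lceil 5r \log r \rceil)$. By Proposition~\ref{prop7}, every $(a,b,c) \in \calT(r)$ satisfies $\NDP(a,b,c;r) \leq Q$, so by Propositions~\ref{prop6inf} and~\ref{prop6} any such triple with $a \leq b \leq c$ obeys the uniform bounds $a \leq 3Q$, $b \leq 6Q^2$, $c \leq 18Q^4$, together with the simpler ceiling $a,b \leq 2Q$ in the noncompact sub-cases $c = \infty$ (with only a handful of further triples having two or three infinite entries). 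Crucially these bounds depend only on $Q$, so the algorithm never needs to compute $\NDP(a,b,c;r)$ on a per-triple basis.

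Second, I would loop through the $O(Q^7)$ candidate triples $(a,b,c)$ in the above range. For each I perform the constant-time hyperbolicity test $\chi(a,b,c)<0$; on the surviving triples I invoke Proposition~\ref{prop:computeadim} to compute $\adim(a,b,c)$, and output the triple exactly when the resulting value equals $r$. Each invocation costs $O(m \log^2 m)$ bit operations, where $m = \lcm(\{a,b,c\} \smallsetminus \{\infty\}) \leq abc = O(Q^7)$, hence $O(Q^7 \log^2 Q)$ per candidate.

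Finally, I would assemble the complexity: $O(Q^7)$ candidates at $O(Q^7 \log^2 Q)$ each yield total cost $O(Q^{14} \log^2 Q)$. Substituting $Q = O(r \log r)$ and using $\log Q = O(\log r)$ produces $O(r^{14} \log^{14}(r)\cdot\log^2(r)) = O(r^{14} \log^{16} r)$, which is comfortably absorbed into the claimed bound $O(r^{14} \log^{21} r)$. There is no serious obstacle here: correctness of the enumeration is immediate from Propositions~\ref{prop7}, \ref{prop6inf}, and~\ref{prop6}, and the main step is just accounting for the bit operations honestly, checking in particular that the auxiliary preparations (computing $\lceil 5r\log r\rceil$, iterating through $O(Q^7)$ indices, screening for $\gcd(k,2m)=1$ inside Proposition~\ref{prop:computeadim}, and maintaining the ordering $a \leq b \leq c$) do not dominate the stated complexity.
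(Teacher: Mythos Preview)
Your proposal is correct and takes essentially the same approach as the paper: bound the search space uniformly via $Q=\max(251,5r\log r)$ using Propositions~\ref{prop7}, \ref{prop6inf}, and~\ref{prop6}, then run the $O(m\log^2 m)$ dimension test of Proposition~\ref{prop:computeadim} on each of the $O(Q^7)$ candidates. Your bookkeeping in fact yields the sharper exponent $\log^{16}r$, and the paper's stated $\log^{21}r$ is simply a looser estimate of the same quantity.
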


\begin{proof}
Recall from \eqref{eqn:Trq} that we write $T(r) = \bigsqcup_q \calT(r;q)$ as a disjoint union, sorting triples $(a,b,c) \in \calT(r)$ by their $r$th least nondividing prime $q=\NDP(a,b,c;r)$.  By Proposition \ref{prop7}, $\calT(r;q)=\emptyset$ for $q > \max(251,5r\log r)$.  The triples with $q\leq 251$ can be enumerated and checked in constant time, so we need only analyze the cases in which $251<q \leq 5r\log r$.

As in \eqref{eqn:cnc}, we compute two cases.  In the noncompact case (with $c=\infty$), by Proposition \ref{prop6inf}, if $(a,b,\infty) \in \calT(r;q)$ then $a,b=\infty$ or $a,b \leq 2q$, so we must loop over $O(q^2)$ triples.  In the compact case, by Proposition \ref{prop6}, if $(a,b,c) \in \calT(r;q)$ then we must loop over
\[ (3q)(6q^2)(18q^4)=324q^7=324(5r\log r)^7 = O(r^7 \log^7 r) \]
triples.  For each such triple $(a,b,c)$, we can compute $\adim(a,b,c)$ using $O(m\log^2 m)$ bit operations by Proposition \ref{prop:computeadim}, and $m=O(abc)=O(r^7 \log^7 r)$.  So the algorithm runs using $O(r^{14}\log^{21}r)$ bit operations.
\end{proof}

The above enumerative procedure gives a ``brute force'' algorithm to compute $\calT(r)$ explicitly.  In the remainder of this section, we describe an algorithm which performs better in practice.

\begin{thm}\label{mainalg}
There exists an algorithm that takes as input integer $r$, where $r\geq 1$, and produces as output the set of $r$-arithmetic triples $(a,b,c)$, where $a,b,c\in\Z_{\geq2}\cup\infty$.
\end{thm}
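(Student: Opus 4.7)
The plan is to package together the tools already developed into a single enumeration procedure, and then to bolt on pruning heuristics that cut the search dramatically in practice without affecting correctness. The skeleton is dictated by Section 4: write $\calT(r) = \bigsqcup_q \calT(r;q)$, where by Proposition \ref{prop7} the union is taken over primes $q \leq Q(r) := \max(251, 5r\log r)$. For each such $q$, the bounds from Propositions \ref{prop6inf} and \ref{prop6} cut out a finite search box for $(a,b,c)$, namely $a \leq 3q$, $b \leq 6q^2$, $c \leq 18q^4$ in the compact case and $a,b \leq 2q$ in the noncompact case. Within each box we iterate over triples $a \leq b \leq c$, compute $\#H$ in $O(\log^2 m)$ bit operations via Theorems \ref{thm:mult} and \ref{thm:multoo}, and then loop over a set of representatives for $(\Z/2m\Z)^\times/H$, testing the sign of $\kappa(a,b,c;k)$ by the integer inequality \eqref{eqn:CURVATURE} of Lemma \ref{lem:range} (or applying Lemma \ref{lem:koo} directly when $c=\infty$). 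Correctness and termination are immediate from the finiteness proved in Theorem \ref{main} together with the exactness of the curvature test.

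To beat the brute force bound in practice, the main idea is aggressive early termination while counting $\adim$. We maintain a running count $N$ of $k \in (\Z/2m\Z)^\times$ for which $\kappa(a,b,c;k)<0$, and we abort the inner loop as soon as $N > r \cdot \#H$ (since $\adim = N/\#H$ by Lemma \ref{lem:adimkappa}). Because arithmetic dimension grows roughly like $[E:\Q]$ and the overwhelming majority of candidate triples land in $\calT(s)$ for $s \gg r$, this cut is triggered quickly for most triples. Further pruning is available outside the inner loop: Proposition \ref{prop6} was proved by exhibiting a specific small prime $p \leq q$ with $\kappa(a,b,c;p)>0$, so for triples near the upper end of the $(a,b,c)$ box it suffices to test the Lemma \ref{lem:range} inequality for the first few primes coprime to $2m$ before committing to the full enumeration of representatives of $(\Z/2m\Z)^\times/H$.

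To organize the outer loops efficiently, we first enumerate candidate sets $S = \{p_{i_1},\dots,p_{i_{r-1}}\}$ of ``missing'' small primes (the primes smaller than $q$ that are \emph{not} required to divide $m$); since $q \leq Q(r)$, there are only polynomially many such sets in $r$. For each $S$, we know that $m$ is divisible by every prime $p < q$ with $p \notin S$, which pins down most of the prime factorization of $m$ and allows us to enumerate the admissible $(a,b,c)$ by choosing the multiplicities of these primes within the size bounds of Proposition \ref{prop6}. The noncompact case $c = \infty$ is handled separately using Corollary \ref{cor:adimoooo} and Proposition \ref{prop6inf}.

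The main obstacle, as with any practical algorithm claim, is that we cannot prove an asymptotic speedup; the worst-case complexity remains that of the brute force algorithm. What we can assert honestly is correctness, termination, and a substantial empirical improvement, which is what the section heading promises and what the explicit tables in Section 6 will demonstrate. A rigorous proof of Theorem \ref{mainalg} is therefore obtained simply by specifying the procedure above and invoking Proposition \ref{prop:computeadim}, Corollary \ref{multthm}, Propositions \ref{prop6inf}, \ref{prop6}, and \ref{prop7} for correctness and termination.
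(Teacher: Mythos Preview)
Your proposal is correct for the literal statement: existence of such an algorithm follows at once from the finiteness bounds of Propositions~\ref{prop6inf}, \ref{prop6}, and \ref{prop7} together with the exact curvature test of Proposition~\ref{prop:computeadim}, and you say as much in your final paragraph. Where you diverge from the paper is in the practical speedup. Your main devices are early abort of the $\adim$ loop once the hyperbolic count exceeds $r\cdot\#H$, and an outer enumeration over sets $S$ of ``missing'' small primes. The paper's algorithm FIND\textunderscore R\textunderscore ARITHMETIC instead extracts from Lemma~\ref{lem:range} a concrete bound on $c$ attached to each prime $q$ coprime to $ab$: if $\kappa(a,b,c;q)<0$ then
\[
c < \frac{qab}{\lvert q_a b + q_b a - ab\rvert}.
\]
Thus for fixed $a,b$, once $c$ exceeds the bound associated to $q$, either $q \mid c$ or the conjugate at $q$ is spherical. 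Sorting these bounds and distributing the corresponding primes into $r$ buckets, the paper concludes that any $r$-arithmetic $c$ above a given threshold must be a multiple of at least one of $r$ explicit integers, and then checks only those multiples (the subroutine CHECK\textunderscore MULTIPLES). This prunes the $c$-loop far more aggressively than your early termination, because it skips most values of $c$ outright rather than starting and aborting a pass over $(\Z/2m\Z)^\times$ for each one; it is also what makes the table for $r\leq 15$ feasible, since $c$ ranges up to $18q^4$. Your approach has the virtue of simplicity and is trivially correct; the paper's buys a dramatically shorter outer loop at the cost of the extra bookkeeping in MAP\textunderscore BOUNDS\textunderscore TO\textunderscore PRIMES and SEARCH\textunderscore Cs.
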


The algorithm in this Theorem is provided by the algorithm FIND\textunderscore R\textunderscore ARITHMETIC below; its proof of correctness is also given below.  

Using the sieve of Eratosthenes, we can maintain a global list ODD\textunderscore PRIMES of the odd prime numbers in increasing order.  We can start with a finite number of primes, and extend as necessary.

\begin{algorithm}[H]
\caption*{\textbf{Algorithm} FIND\textunderscore R\textunderscore ARITHMETIC(\texttt{r})}
\begin{algorithmic}[1]
\REQUIRE $r \in \Z_{\geq 1}$
\ENSURE Outputs the set of $r$-arithmetic triangle groups $(a,b,c)$,  where $a,b,c \in \Z_{\geq 2}\cup \infty$.
\STATE Initialize $\texttt{arithmetic}$ to empty list
\STATE $\texttt{maxNDPIndex} \leftarrow \max(36,2\texttt{r}+5)$ //find maximum $r$th non-dividing prime of $(a,b,c)$
\STATE $\texttt{maxNDP}\leftarrow \textup{ODD\textunderscore PRIMES}[\texttt{maxNDPIndex}]$
\STATE $\texttt{end} \leftarrow [3*\texttt{maxNDP},6*\texttt{maxNDP}^2,18*\texttt{maxNDP}^4]$ //upper bounds on $a,b,c$
\STATE $\texttt{maxPrimeIndex} \leftarrow \texttt{maxNDPIndex}*\texttt{r}$ //scale with $r$ to make sure each $divisor$ is big enough
\FOR{$\texttt{a} = 2$ to $\texttt{end}[1]$}
\FOR{$\texttt{b} = \max(\texttt{a},3)$ to $\texttt{end}[2]$}
\STATE MAP\textunderscore BOUNDS\textunderscore TO\textunderscore PRIMES($\texttt{a},\texttt{b},\texttt{maxPrimeIndex},\texttt{end}[3]$)
\STATE SEARCH\textunderscore Cs($\texttt{a},\texttt{b},\texttt{end}[3])$
\ENDFOR
\ENDFOR
\STATE // Now for the noncompact case:
\FOR {$\texttt{a}=2$ to $2*\texttt{maxNDP}$}
\IF {IS\textunderscore R\textunderscore ARITHMETIC($\texttt{a},\infty,\infty,\texttt{r}$)}
\STATE Add ($\texttt{a},\texttt{b},\texttt{c}$) to $\texttt{arithmetic}$
\ENDIF
\FOR {$\texttt{b}=\texttt{a}$ to $2*\texttt{maxNDP}$}
\IF {IS\textunderscore R\textunderscore ARITHMETIC($\texttt{a},\texttt{b},\infty,r$)}
\STATE Add ($\texttt{a},\texttt{b},\texttt{c}$) to $\texttt{arithmetic}$
\ENDIF
\ENDFOR
\ENDFOR
\STATE output $\texttt{arithmetic}$
\end{algorithmic}
\end{algorithm}

\begin{algorithm}[H]
\caption*{\textbf{Subroutine} MAP\textunderscore BOUNDS\textunderscore TO\textunderscore PRIMES($\texttt{a},\texttt{b},\texttt{maxPrimeIndex},\texttt{maxc}$)}
\begin{algorithmic}[1]
%\REQUIRE $a,b \in \Z_{\geq 2} \cup \{\infty\}$.  
\ENSURE Returns map \texttt{boundToPrimes} of each bound on $c$ to the primes associated to that bound.  That is, if $c > \texttt{bound}$, then $c<2q$ or $q|c$ for all $q \in$\texttt{boundToPrimes}[$\texttt{bound}$].

\STATE Initialize \texttt{boundToPrimes} to an empty map
\FOR{$\texttt{j}=0$ to $\texttt{maxPrimeIndex}$}
\STATE $\texttt{q} \leftarrow$ ODD\textunderscore PRIMES[j]
\IF{$\texttt{q}$ does not divide $\texttt{a}$ or $\texttt{b}$}
\STATE $\texttt{bound} \leftarrow \lceil \texttt{q}*\texttt{a}*\texttt{b}/|\texttt{q}_{\textup{a}}*\texttt{q} + \texttt{q}_{\textup{b}}*\texttt{a}-\texttt{a}*\texttt{b}|\rceil$
\IF{\texttt{boundToPrimes}[$\texttt{bound}$] == NULL}
\STATE Initialize \texttt{boundToPrimes}[$\texttt{bound}$] to empty list
\ENDIF
\STATE Add $\texttt{q}$ to \texttt{boundToPrimes}[$\texttt{bound}$]
\ENDIF
\ENDFOR
\STATE $\texttt{boundToPrimes}[\texttt{maxc}] = \emptyset$
\RETURN \texttt{boundToPrimes}
\end{algorithmic}
\end{algorithm}

\begin{algorithm}[H]
\caption*{\textbf{Subroutine} SEARCH\textunderscore Cs($a,b,\texttt{maxc}$)}
\begin{algorithmic}[1]
\ENSURE Adds all $r$-arithmetic triangle groups $(a,b,c)$ to $\texttt{arithmetic}$ for given $a$ and $b$, where $c\leq \texttt{maxc}$.
\FOR{$\texttt{c} = \texttt{b}$ to ODD\textunderscore PRIMES[\texttt{maxNDPIndex}]}
\IF {IS\textunderscore R\textunderscore ARITHMETIC($\texttt{a},\texttt{b},\texttt{c},\texttt{r}$)}
\STATE Add ($\texttt{a},\texttt{b},\texttt{c}$) to $\texttt{arithmetic}$
\ENDIF
\ENDFOR
\STATE $\texttt{divisors}$ $\leftarrow$ integer array of length $n$
\STATE for $\texttt{j}=1$ to $\texttt{r}$, $\texttt{divisors}$[$j$] $\leftarrow$ 1
\STATE $\texttt{i}\leftarrow 1$ // index in $\texttt{divisors}$
\STATE $\texttt{startc}\leftarrow \texttt{b}$
\FOR{\texttt{bound} in the key set of \texttt{boundToPrimes}, sorted by increasing order}
\FOR{$\texttt{divisor}$ in $\texttt{divisors}$}
\STATE CHECK\textunderscore MULTIPLES($\texttt{divisor}, \texttt{a},\texttt{b}, \texttt{bound}, \texttt{startc}, \texttt{r})$
\ENDFOR
\FOR {$\texttt{q}$ in $\texttt{boundToPrimes}[\texttt{bound}]$}
\STATE $\texttt{divisors}$[$\texttt{i}$] $\leftarrow$ $\texttt{divisors}$[$\texttt{i}$]*$\texttt{q}$
\STATE $\texttt{i} \leftarrow (\texttt{i}+1)\text{ mod }\texttt{r}$
\ENDFOR
\IF{$\texttt{divisor} >\texttt{maxc}$  $\forall \texttt{divisor} \in \texttt{divisors}$}
\STATE break
\ENDIF
\STATE $\texttt{startc} \leftarrow \textup{max}(\texttt{startc},\texttt{bound})$
\ENDFOR
\end{algorithmic}
\end{algorithm}

%\begin{algorithm}[H]
%\caption{UPDATE\textunderscore DIVISORS($divisors, primes, N, i$)}
%\begin{algorithmic}
%\ENSURE For each prime $q$ in primes, multiply a divisor in divisors by $q$.
%\FOR {$q$ in primes}
%\STATE divisors[$i$] $\leftarrow$ divisors[$i$]*$q$
%\STATE $i \leftarrow (i+1)\%N$
%\ENDFOR
%\RETURN $i$
%\end{algorithmic}
%\end{algorithm}

\begin{algorithm}[H]
\caption*{\textbf{Subroutine} CHECK\textunderscore MULTIPLES($\texttt{divisor}, \texttt{a},\texttt{b}, \texttt{bound}, \texttt{startc}$)}
\begin{algorithmic}[1]
%\REQUIRE $a,b,c \in \Z_{\geq 2} \cup \{\infty\}$.  
\ENSURE Checks $(a,b,c)$, where $c$ is each multiple of \texttt{divisor} between $\texttt{startc}$ and $\texttt{bound}$.  Adds $r$-arithmetic triples to $\texttt{arithmetic}$.
\STATE $\texttt{c} \leftarrow \texttt{startc}-(\texttt{startc}$ mod $ \texttt{divisor})$
\IF{$\texttt{c}<\texttt{startc}$}
\STATE $\texttt{c} \leftarrow \texttt{c}+\texttt{divisor}$
\ENDIF
\WHILE{$\texttt{c}<\texttt{bound}$}
\IF {IS\textunderscore ARITHMETIC($\texttt{a},\texttt{b},\texttt{c},1$)}
\STATE Add ($\texttt{a},\texttt{b},\texttt{c}$) to $\texttt{arithmetic}$
\ENDIF
\STATE $\texttt{c} \leftarrow \texttt{c} + \texttt{divisor}$
\ENDWHILE
\end{algorithmic}
\end{algorithm}

\begin{algorithm}[H]
\caption*{\textbf{Subroutine} IS\textunderscore R\textunderscore ARITHMETIC($\texttt{a},\texttt{b},\texttt{c},\texttt{r}$)}
\begin{algorithmic}[1]
%\REQUIRE $a,b,c \in \Z_{\geq 2} \cup \{\infty\}$.  
\ENSURE Returns whether $(a,b,c)$ is an $r$-arithmetic triple.
\STATE $\texttt{multiplicity} \leftarrow $GET\textunderscore MULTIPLICITY$(\texttt{a},\texttt{b},\texttt{c})$
\STATE $\texttt{m} \leftarrow 2*\lcm\{s\in\{\texttt{a},\texttt{b},\texttt{c}\}|s \;\textup{finite}\}$
\STATE $\texttt{numHyperbolic} \leftarrow 0$
\FOR{$\texttt{k} = 1$ to $\texttt{m}$}
\IF {gcd$(\texttt{m},\texttt{k}) == 1$}
\STATE $\texttt{sign} \leftarrow $CURVATURE$(\texttt{a},\texttt{b},\texttt{c},\texttt{k})$
\IF{$\texttt{sign} < 0$}
\STATE $\texttt{numHyperbolic} \leftarrow \texttt{numHyperbolic} + 1$
\ENDIF
\ENDIF
\ENDFOR
\RETURN $\texttt{numHyperbolic} == \texttt{r}*\texttt{multiplicity}$
\end{algorithmic}
\end{algorithm}

\begin{proof}[{Proof of Theorem \textup{\ref{mainalg}}}]
The correctness of \texttt{maxNDPIndex} follows from Proposition \ref{prop7}, and the correctness total bounds on $a$, $b$, and $c$ (stored in the array $\texttt{end}$) follows from Proposition \ref{prop6}.  Furthermore, the correctness of the noncompact case follows directly from Proposition \ref{prop6inf}.

Just checking all $(a,b,c)$ up to these bounds would give us a theoretically finite computation producing all arithmetic triples.  But since the bound on $c$ is so large, we do some additional ``filtering" in the SEARCH\textunderscore Cs subroutine.  This is a crucial improvement in efficiency.

So it suffices to prove the correctness of SEARCH\textunderscore Cs.  We have from Lemma \ref{lem:range} that for any arithmetic triple $(a,b,c)$ and $k<2c$ coprime to $a,b,c$, 
	$$ \left|\frac{k_{a}}{a} + \frac{k_{b}}{b}-1\right| < \frac{k_{c}}{c} < 1-\left|\frac{k_{a}}{a} - \frac{k_{b}}{b}\right|.$$

Then $$c < \frac{k_{c}}{\left\lvert\frac{k_{a}}{a} + \frac{k_{b}}{b}-1\right\rvert} \leq \frac{k}{\left\lvert\frac{k_{a}}{a} + \frac{k_{b}}{b}-1\right\rvert} = \frac{kab}{\left\lvert k_{a}b + k_{b}a-ab\right\rvert}.$$
Therefore, for any arithmetic triple $(a,b,c)$ with given $a$ and $b$, and any prime $q$, one of the following: 
\begin{center}
either $q \mid c$ or $c < 2q$ or $c < \displaystyle{\left\lceil \frac{qab}{\left\lvert q_{a}b + q_{b}a-ab\right\rvert}\right\rceil}$.
\end{center}

In SEARCH\textunderscore Cs, we first check all $c<2*\texttt{maxNDP}$, where $\texttt{maxNDP}$ is the maximum $r$th Non-Dividing Prime for any arithmetic triple $(a,b,c)$.  This is because we know that, for any arithmetic triple $(a,b,c)$, there is a prime $q\leq \texttt{maxNDP}$ that does not divide $c$, and so $c<2q\leq 2*\texttt{maxNDP}$ or $c<\lceil qab/|q_{a}b + q_{b}a-1|\rceil$, the bound associated to $q$.

Suppose $(a,b,c)$ is $r$-arithmetic, $c>2*\texttt{maxNDP}$, and $c>\texttt{bound}$, where $\texttt{bound}$ is in the keyset of \texttt{boundToPrimes}.  Then for all primes $q$ in \texttt{boundToPrimes}[$\texttt{bound}'$], where $\texttt{bound}'<\texttt{bound}$, we have that if $q$ does not divide $c$, then $\kappa(a,b,c;q) > 0$ and $\kappa(a,b,c;q) \neq \kappa(a,b,c;1)$.  Let $B$ = \{$q|q\in$\texttt{boundToPrimes}[$\texttt{bound}'$],$\texttt{bound}'\leq \texttt{bound}$\}.  Therefore, there exist at most $r-1$ primes $q\in B$ that do not divide $c$.  The algorithm partitions $B$ into $r$ sets, and lets each $\texttt{divisor}$ in $\texttt{divisors}$ be the product of primes in one such set.  Hence, $c$ must be a multiple of at least one $\texttt{divisor}$ in $\texttt{divisors}$.  Therefore, the algorithm checks all possible $r$-arithmetic triples ($a,b,c$).
\end{proof}

We ran this algorithm and obtained the results in listed in the final section.

\section{Arithmetic triples of small dimension}

\subsection{Lists of $r$-arithmetic triples for $r \leq 5$}
\subsubsection{$1$-arithmetic triples}\ \\

76 compact $1$-arithmetic triples:

\begin{multicols}{5}
(2, 3, 7)

(2, 3, 8)

(2, 3, 9)

(2, 3, 10)

(2, 3, 11)

(2, 3, 12)

(2, 3, 14)

(2, 3, 16)

(2, 3, 18)

(2, 3, 24)

(2, 3, 30)

(2, 4, 5)

(2, 4, 6)

(2, 4, 7)

(2, 4, 8)

(2, 4, 10)

(2, 4, 12)

(2, 4, 18)

(2, 5, 5)

(2, 5, 6)

(2, 5, 8)

(2, 5, 10)

(2, 5, 20)

(2, 5, 30)

(2, 6, 6)

(2, 6, 8)

(2, 6, 12)

(2, 7, 7)

(2, 7, 14)

(2, 8, 8)

(2, 8, 16)

(2, 9, 18)

(2, 10, 10)

(2, 12, 12)

(2, 12, 24)

(2, 15, 30)

(2, 18, 18)

(3, 3, 4)

(3, 3, 5)

(3, 3, 6)

(3, 3, 7)

(3, 3, 8)

(3, 3, 9)

(3, 3, 12)

(3, 3, 15)

(3, 4, 4)

(3, 4, 6)

(3, 4, 12)

(3, 5, 5)

(3, 6, 6)

(3, 6, 18)

(3, 8, 8)

(3, 8, 24)

(3, 10, 30)

(3, 12, 12)

(4, 4, 4)

(4, 4, 5)

(4, 4, 6)

(4, 4, 9)

(4, 5, 5)

(4, 6, 6)

(4, 8, 8)

(4, 16, 16)

(5, 5, 5)

(5, 5, 10)

(5, 5, 15)

(5, 10, 10)

(6, 6, 6)

(6, 12, 12)

(6, 24, 24)

(7, 7, 7)

(8, 8, 8)

(9, 9, 9)

(9, 18, 18)

(12, 12, 12)

(15, 15, 15)
\end{multicols}

9 noncompact $1$-arithmetic triples:

\begin{multicols}{5}
($\infty$, $\infty$, $\infty$)

(2, $\infty$, $\infty$)

(2, 3, $\infty$)

(2, 4, $\infty$)

(2, 6, $\infty$)

(3, $\infty$, $\infty$)

(3, 3, $\infty$)

(4, 4, $\infty$)

(6, 6, $\infty$)

\end{multicols}

This list of compact 1-arithmetic Triples agrees with that of Takeuchi, thus verifying his results \cite{Takeuchi}.

\subsubsection{$2$-arithmetic triples}\ \\

148 compact $2$-arithmetic triples:

\begin{multicols}{5}
(2, 3, 13)

(2, 3, 15)

(2, 3, 17)

(2, 3, 20)

(2, 3, 21)

(2, 3, 22)

(2, 3, 26)

(2, 3, 28)

(2, 3, 36)

(2, 3, 40)

(2, 3, 42)

(2, 3, 60)

(2, 4, 9)

(2, 4, 11)

(2, 4, 14)

(2, 4, 15)

(2, 4, 16)

(2, 4, 20)

(2, 4, 24)

(2, 4, 30)

(2, 4, 42)

(2, 5, 7)

(2, 5, 9)

(2, 5, 12)

(2, 5, 15)

(2, 5, 60)

(2, 6, 7)

(2, 6, 9)

(2, 6, 10)

(2, 6, 14)

(2, 6, 18)

(2, 6, 20)

(2, 6, 30)

(2, 7, 8)

(2, 7, 12)

(2, 7, 21)

(2, 7, 28)

(2, 8, 12)

(2, 8, 24)

(2, 9, 9)

(2, 9, 12)

(2, 9, 36)

(2, 10, 12)

(2, 10, 20)

(2, 10, 30)

(2, 11, 11)

(2, 11, 22)

(2, 13, 26)

(2, 14, 14)

(2, 14, 28)

(2, 15, 15)

(2, 16, 16)

(2, 18, 36)

(2, 20, 20)

(2, 20, 40)

(2, 21, 42)

(2, 24, 24)

(2, 30, 30)

(2, 30, 60)

(2, 42, 42)

(3, 3, 10)

(3, 3, 11)

(3, 3, 13)

(3, 3, 14)

(3, 3, 18)

(3, 3, 20)

(3, 3, 21)

(3, 3, 30)

(3, 4, 5)

(3, 4, 7)

(3, 4, 8)

(3, 4, 9)

(3, 4, 10)

(3, 4, 20)

(3, 4, 28)

(3, 4, 36)

(3, 5, 6)

(3, 5, 7)

(3, 5, 9)

(3, 5, 10)

(3, 5, 15)

(3, 7, 7)

(3, 7, 21)

(3, 9, 9)

(3, 10, 10)

(3, 12, 36)

(3, 14, 14)

(3, 14, 42)

(3, 18, 18)

(3, 20, 20)

(3, 20, 60)

(3, 30, 30)

(4, 4, 7)

(4, 4, 8)

(4, 4, 10)

(4, 4, 12)

(4, 4, 15)

(4, 4, 21)

(4, 5, 6)

(4, 5, 10)

(4, 5, 12)

(4, 6, 8)

(4, 6, 10)

(4, 6, 12)

(4, 7, 7)

(4, 8, 24)

(4, 10, 20)

(4, 12, 12)

(4, 24, 24)

(5, 5, 6)

(5, 5, 30)

(5, 6, 6)

(5, 6, 10)

(5, 8, 40)

(5, 12, 12)

(5, 20, 20)

(5, 30, 30)

(6, 6, 7)

(6, 6, 9)

(6, 6, 10)

(6, 6, 15)

(6, 7, 7)

(6, 8, 8)

(6, 9, 9)

(6, 10, 10)

(7, 7, 14)

(7, 14, 14)

(7, 14, 42)

(7, 28, 28)

(8, 8, 12)

(8, 16, 16)

(9, 9, 18)

(9, 36, 36)

(10, 10, 10)

(10, 10, 15)

(10, 20, 20)

(10, 40, 40)

(11, 11, 11)

(12, 24, 24)

(13, 13, 13)

(14, 14, 14)

(15, 30, 30)

(15, 60, 60)

(18, 18, 18)

(20, 20, 20)

(21, 21, 21)

(21, 42, 42)

(30, 30, 30)
\end{multicols}

16 noncompact $2$-arithmetic triples:

\begin{multicols}{5}

(2, 5, $\infty$)

(2, 8, $\infty$)

(2, 10, $\infty$)

(2, 12, $\infty$)

(3, 4, $\infty$)

(3, 5, $\infty$)

(3, 6, $\infty$)

(4, $\infty$, $\infty$)

(4, 6, $\infty$)

(4, 12, $\infty$)

(5, $\infty$, $\infty$)

(5, 5, $\infty$)

(6, $\infty$, $\infty$)

(8, 8, $\infty$)

(10, 10, $\infty$)

(12, 12, $\infty$)

\end{multicols}

\subsubsection{$3$-arithmetic triples}\ \\

111 compact $3$-arithmetic triples:

\begin{multicols}{5}
(2, 3, 19)

(2, 3, 23)

(2, 3, 27)

(2, 3, 32)

(2, 3, 34)

(2, 3, 38)

(2, 3, 48)

(2, 3, 50)

(2, 3, 54)

(2, 3, 66)

(2, 4, 13)

(2, 4, 22)

(2, 4, 26)

(2, 4, 28)

(2, 4, 36)

(2, 5, 14)

(2, 5, 16)

(2, 5, 18)

(2, 6, 11)

(2, 6, 15)

(2, 6, 16)

(2, 6, 24)

(2, 7, 9)

(2, 7, 10)

(2, 7, 42)

(2, 8, 9)

(2, 8, 10)

(2, 8, 18)

(2, 9, 10)

(2, 10, 15)

(2, 12, 36)

(2, 12, 48)

(2, 13, 13)

(2, 16, 32)

(2, 17, 34)

(2, 19, 38)

(2, 22, 22)

(2, 24, 48)

(2, 25, 50)

(2, 26, 26)

(2, 27, 54)

(2, 28, 28)

(2, 33, 66)

(2, 36, 36)

(3, 3, 16)

(3, 3, 17)

(3, 3, 19)

(3, 3, 24)

(3, 3, 25)

(3, 3, 27)

(3, 3, 33)

(3, 6, 8)

(3, 6, 10)

(3, 6, 12)

(3, 6, 30)

(3, 7, 9)

(3, 9, 18)

(3, 9, 27)

(3, 11, 11)

(3, 15, 15)

(3, 16, 16)

(3, 16, 48)

(3, 18, 54)

(3, 22, 66)

(3, 24, 24)

(4, 4, 11)

(4, 4, 13)

(4, 4, 14)

(4, 4, 18)

(4, 5, 20)

(4, 6, 36)

(4, 9, 9)

(4, 10, 10)

(4, 12, 18)

(4, 14, 28)

(4, 18, 18)

(5, 5, 7)

(5, 5, 8)

(5, 5, 9)

(5, 6, 30)

(5, 7, 7)

(5, 8, 8)

(5, 9, 9)

(5, 10, 30)

(5, 15, 15)

(6, 6, 8)

(6, 6, 12)

(6, 8, 24)

(6, 9, 18)

(6, 15, 30)

(6, 36, 36)

(6, 48, 48)

(7, 7, 21)

(8, 8, 9)

(8, 12, 24)

(8, 32, 32)

(10, 15, 30)

(11, 22, 22)

(12, 12, 18)

(12, 12, 24)

(12, 48, 48)

(13, 26, 26)

(14, 28, 28)

(16, 16, 16)

(17, 17, 17)

(18, 36, 36)

(19, 19, 19)

(24, 24, 24)

(25, 25, 25)

(27, 27, 27)

(33, 33, 33)
\end{multicols}

13 noncompact $3$-arithmetic triples:

\begin{multicols}{5}
(2, 7, $\infty$)

(2, 9, $\infty$)

(2, 14, $\infty$)

(2, 18, $\infty$)

(3, 7, $\infty$)

(3, 9, $\infty$)

(6, 18, $\infty$)

(7, $\infty$, $\infty$)

(7, 7, $\infty$)

(9, $\infty$, $\infty$)

(9, 9, $\infty$)

(14, 14, $\infty$)

(18, 18, $\infty$)

\end{multicols}

\subsubsection{$4$-arithmetic triples} \ \\

286 compact $4$-arithmetic triples:

\begin{multicols}{5}
(2, 3, 25)

(2, 3, 29)

(2, 3, 33)

(2, 3, 35)

(2, 3, 39)

(2, 3, 44)

(2, 3, 45)

(2, 3, 46)

(2, 3, 52)

(2, 3, 56)

(2, 3, 70)

(2, 3, 72)

(2, 3, 78)

(2, 3, 84)

(2, 3, 90)

(2, 4, 17)

(2, 4, 19)

(2, 4, 21)

(2, 4, 27)

(2, 4, 32)

(2, 4, 34)

(2, 4, 40)

(2, 4, 48)

(2, 4, 60)

(2, 4, 66)

(2, 5, 11)

(2, 5, 13)

(2, 5, 22)

(2, 5, 24)

(2, 5, 25)

(2, 5, 28)

(2, 5, 36)

(2, 5, 40)

(2, 5, 42)

(2, 5, 45)

(2, 5, 50)

(2, 5, 90)

(2, 6, 13)

(2, 6, 21)

(2, 6, 22)

(2, 6, 26)

(2, 6, 28)

(2, 6, 36)

(2, 6, 42)

(2, 7, 18)

(2, 7, 30)

(2, 7, 35)

(2, 7, 70)

(2, 7, 84)

(2, 8, 14)

(2, 8, 15)

(2, 8, 20)

(2, 8, 30)

(2, 8, 32)

(2, 8, 40)

(2, 8, 48)

(2, 10, 14)

(2, 10, 18)

(2, 10, 24)

(2, 10, 60)

(2, 11, 12)

(2, 11, 33)

(2, 11, 44)

(2, 11, 66)

(2, 12, 14)

(2, 12, 15)

(2, 12, 16)

(2, 12, 18)

(2, 12, 20)

(2, 12, 30)

(2, 12, 60)

(2, 13, 78)

(2, 14, 21)

(2, 14, 42)

(2, 15, 20)

(2, 15, 60)

(2, 16, 24)

(2, 16, 48)

(2, 17, 17)

(2, 18, 54)

(2, 19, 19)

(2, 20, 30)

(2, 20, 60)

(2, 21, 21)

(2, 22, 44)

(2, 23, 46)

(2, 26, 52)

(2, 27, 27)

(2, 28, 56)

(2, 32, 32)

(2, 34, 34)

(2, 35, 70)

(2, 36, 72)

(2, 39, 78)

(2, 40, 40)

(2, 42, 84)

(2, 45, 90)

(2, 48, 48)

(2, 60, 60)

(2, 66, 66)

(3, 3, 22)

(3, 3, 23)

(3, 3, 26)

(3, 3, 28)

(3, 3, 35)

(3, 3, 36)

(3, 3, 39)

(3, 3, 42)

(3, 3, 45)

(3, 4, 11)

(3, 4, 14)

(3, 4, 15)

(3, 4, 16)

(3, 4, 18)

(3, 4, 24)

(3, 4, 30)

(3, 4, 60)

(3, 5, 8)

(3, 5, 11)

(3, 5, 12)

(3, 5, 13)

(3, 5, 20)

(3, 5, 25)

(3, 5, 30)

(3, 6, 7)

(3, 6, 9)

(3, 6, 14)

(3, 6, 42)

(3, 7, 14)

(3, 7, 42)

(3, 8, 10)

(3, 8, 12)

(3, 8, 16)

(3, 8, 40)

(3, 10, 12)

(3, 10, 15)

(3, 11, 22)

(3, 11, 33)

(3, 12, 60)

(3, 13, 13)

(3, 13, 39)

(3, 15, 30)

(3, 15, 45)

(3, 21, 21)

(3, 22, 22)

(3, 24, 72)

(3, 26, 26)

(3, 26, 78)

(3, 28, 28)

(3, 28, 84)

(3, 30, 90)

(3, 36, 36)

(3, 42, 42)

(4, 4, 16)

(4, 4, 17)

(4, 4, 20)

(4, 4, 24)

(4, 4, 30)

(4, 4, 33)

(4, 5, 8)

(4, 5, 15)

(4, 5, 30)

(4, 5, 60)

(4, 6, 7)

(4, 6, 9)

(4, 6, 14)

(4, 6, 18)

(4, 6, 60)

(4, 7, 12)

(4, 7, 14)

(4, 7, 28)

(4, 8, 40)

(4, 9, 12)

(4, 9, 18)

(4, 9, 36)

(4, 10, 15)

(4, 11, 44)

(4, 12, 30)

(4, 14, 14)

(4, 15, 15)

(4, 15, 20)

(4, 20, 20)

(4, 30, 30)

(4, 30, 60)

(4, 32, 32)

(4, 40, 40)

(4, 48, 48)

(5, 5, 11)

(5, 5, 12)

(5, 5, 14)

(5, 5, 18)

(5, 5, 20)

(5, 5, 21)

(5, 5, 25)

(5, 5, 45)

(5, 6, 15)

(5, 8, 10)

(5, 8, 24)

(5, 10, 15)

(5, 12, 20)

(5, 12, 60)

(5, 14, 14)

(5, 15, 30)

(5, 18, 18)

(5, 24, 24)

(5, 60, 60)

(6, 6, 11)

(6, 6, 13)

(6, 6, 14)

(6, 6, 18)

(6, 6, 21)

(6, 7, 14)

(6, 10, 15)

(6, 11, 11)

(6, 12, 60)

(6, 14, 14)

(6, 14, 21)

(6, 15, 15)

(6, 16, 16)

(6, 18, 18)

(6, 20, 20)

(6, 21, 42)

(6, 30, 30)

(6, 60, 60)

(7, 7, 9)

(7, 7, 15)

(7, 7, 35)

(7, 7, 42)

(7, 8, 8)

(7, 10, 10)

(7, 12, 12)

(7, 21, 21)

(7, 42, 42)

(8, 8, 10)

(8, 8, 15)

(8, 8, 16)

(8, 8, 20)

(8, 8, 24)

(8, 12, 12)

(8, 16, 48)

(8, 24, 24)

(8, 48, 48)

(9, 10, 10)

(9, 12, 12)

(9, 12, 36)

(9, 54, 54)

(10, 10, 12)

(10, 10, 30)

(10, 12, 12)

(10, 15, 15)

(10, 20, 60)

(10, 30, 30)

(10, 60, 60)

(11, 11, 22)

(11, 11, 33)

(11, 22, 66)

(11, 44, 44)

(12, 12, 15)

(12, 12, 30)

(12, 16, 16)

(13, 13, 39)

(13, 52, 52)

(14, 14, 21)

(14, 56, 56)

(15, 15, 30)

(15, 20, 20)

(16, 16, 24)

(16, 32, 32)

(17, 34, 34)

(18, 18, 27)

(18, 72, 72)

(20, 20, 30)

(20, 40, 40)

(21, 84, 84)

(22, 22, 22)

(23, 23, 23)

(24, 48, 48)

(26, 26, 26)

(28, 28, 28)

(30, 60, 60)

(33, 66, 66)

(35, 35, 35)

(36, 36, 36)

(39, 39, 39)

(42, 42, 42)

(45, 45, 45)
\end{multicols}

31 noncompact $4$-arithmetic triples:

\begin{multicols}{5}
(2, 15, oo)

(2, 16, oo)

(2, 20, oo)

(2, 24, oo)

(2, 30, oo)

(3, 8, oo)

(3, 10, oo)

(3, 12, oo)

(3, 15, oo)

(4, 5, oo)

(4, 8, oo)

(4, 10, oo)

(4, 20, oo)

(5, 6, oo)

(5, 10, oo)

(5, 15, oo)

(6, 8, oo)

(6, 10, oo)

(6, 12, oo)

(6, 30, oo)

(8, oo, oo)

(8, 24, oo)

(10, oo, oo)

(10, 30, oo)

(12, oo, oo)

(15, oo, oo)

(15, 15, oo)

(16, 16, oo)

(20, 20, oo)

(24, 24, oo)

(30, 30, oo)
\end{multicols}

\subsubsection{$5$-arithmetic triples}\ \\

94 compact $5$-arithmetic triples:

\begin{multicols}{5}
(2, 3, 31)

(2, 3, 58)

(2, 3, 62)

(2, 3, 64)

(2, 3, 96)

(2, 3, 102)

(2, 4, 23)

(2, 4, 25)

(2, 4, 38)

(2, 4, 44)

(2, 4, 50)

(2, 4, 54)

(2, 5, 26)

(2, 5, 70)

(2, 6, 17)

(2, 6, 32)

(2, 6, 48)

(2, 7, 11)

(2, 8, 11)

(2, 8, 13)

(2, 8, 22)

(2, 9, 14)

(2, 9, 16)

(2, 9, 27)

(2, 9, 54)

(2, 9, 72)

(2, 9, 90)

(2, 10, 16)

(2, 10, 40)

(2, 23, 23)

(2, 25, 25)

(2, 29, 58)

(2, 31, 62)

(2, 32, 64)

(2, 38, 38)

(2, 44, 44)

(2, 48, 96)

(2, 50, 50)

(2, 51, 102)

(2, 54, 54)

(3, 3, 29)

(3, 3, 31)

(3, 3, 32)

(3, 3, 48)

(3, 3, 51)

(3, 6, 16)

(3, 6, 24)

(3, 12, 24)

(3, 17, 17)

(3, 32, 32)

(3, 32, 96)

(3, 34, 102)

(3, 48, 48)

(4, 4, 19)

(4, 4, 22)

(4, 4, 25)

(4, 4, 27)

(4, 11, 11)

(4, 13, 13)

(4, 18, 36)

(4, 22, 22)

(5, 5, 13)

(5, 5, 35)

(5, 6, 18)

(5, 10, 20)

(5, 16, 16)

(5, 18, 30)

(5, 40, 40)

(6, 6, 16)

(6, 6, 24)

(6, 7, 42)

(6, 12, 36)

(6, 16, 48)

(7, 9, 9)

(8, 8, 11)

(8, 9, 9)

(8, 10, 10)

(9, 9, 27)

(9, 9, 36)

(9, 9, 45)

(10, 10, 20)

(12, 16, 48)

(12, 18, 36)

(16, 64, 64)

(19, 38, 38)

(22, 44, 44)

(24, 96, 96)

(25, 50, 50)

(27, 54, 54)

(29, 29, 29)

(31, 31, 31)

(32, 32, 32)

(48, 48, 48)

(51, 51, 51)
\end{multicols}

6 noncompact $5$-arithmetic triples:

\begin{multicols}{5}
(2, 11, $\infty$)

(2, 22, $\infty$)

(3, 11, $\infty$)

(11, $\infty$, $\infty$)

(11, 11, $\infty$)

(22, 22, $\infty$)

\end{multicols}

\subsection{Number of $r$-arithmetic triples for $r\leq 15$}

\begin{center}
  \begin{tabular}{ l || c | c }
    $r$ & $\#\calT(r)_{0}$ & $\#\calT(r)_{\infty}$ \\ \hline \hline
    1 & 76 & 9\\ \hline
    2 & 148 & 16\\ \hline
    3 & 111 & 13\\ \hline
    4 & 286 & 31\\ \hline
    5 & 94 & 6\\ \hline
    6 & 430 & 37\\ \hline
    7 & 100 & 0 \\ \hline
    8 & 435 & 48\\ \hline
    9 & 89 & 16\\ \hline
    10 & 558 & 28\\ \hline
    11 & 83 & 6\\ \hline
    12 & 699 & 92\\ \hline
    13 & 87 & 0\\ \hline
    14 & 666 & 6\\ \hline
    15 & 86 & 8
  \end{tabular}
\end{center}

The following graphs plot $\#\calT(r)_{0}$ against $r$ for odd $r$ and even $r$, respectively.
\begin{center}
\includegraphics[scale=.4]{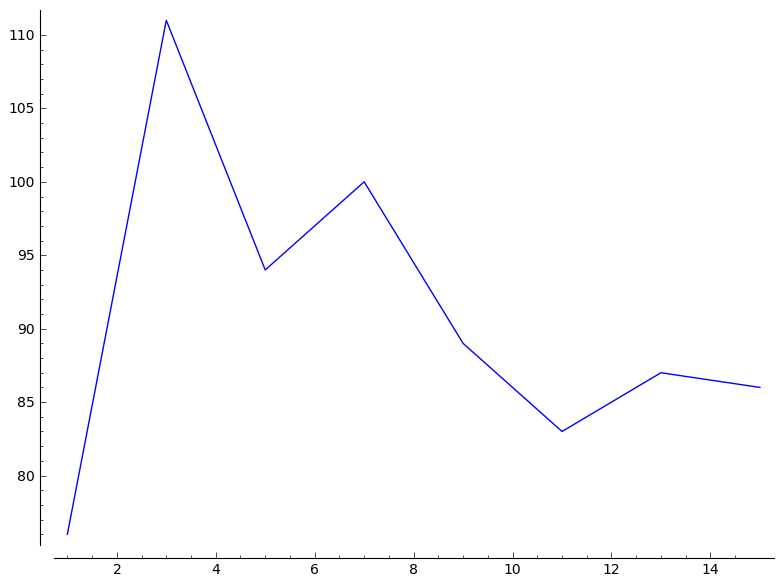}
\includegraphics[scale=.4]{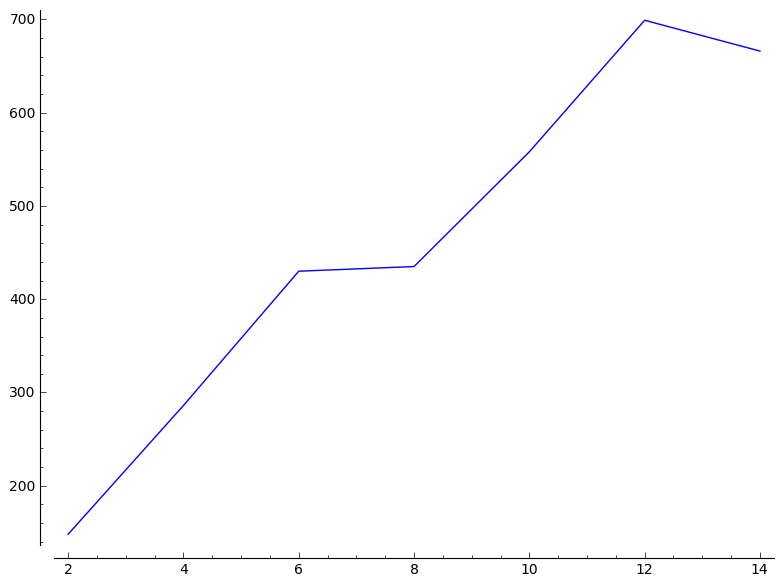}
\end{center}

It appears that the function $r \mapsto \#\calT(r)$ depends on the factorization of $r$ into primes, but we have not been able to provide a convincing growth rate.  Nevertheless, there is evidence for the following conjecture.

\begin{conj*}
The set $\calT(r)$ is nonempty for all $r \geq 1$.
\end{conj*}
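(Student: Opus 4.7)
The plan is to exhibit, for each $r \geq 1$, an explicit hyperbolic triple realizing arithmetic dimension $r$. The most natural family to analyze is $\{(p,p,p) : p \text{ odd}, p \geq 5\}$: by Theorem~\ref{thm:mult}(a), $H=\{\pm 1\}$, and writing $x=\cos(k\pi/p)$ the curvature factors as
\[ \kappa(p,p,p;k) \;=\; -(2x^3+3x^2-1) \;=\; -(x+1)^2(2x-1), \]
so $\kappa<0$ iff $x>1/2$ iff $k_p<p/3$. Hence
\[ \adim(p,p,p) \;=\; \#\{k : 1\leq k < p/3,\ k\text{ odd},\ \gcd(k,p)=1\}. \]
For $p$ prime this count equals $r$ exactly when $p\in\{6r-1,6r+1\}$, while for composite $p$ the sieve removes additional terms, opening up new attainable values. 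For example, no prime $p$ yields $\adim(p,p,p)=20$, but $p=143=11\cdot 13$ does.

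The plan is then to show that every positive integer arises as $\adim(p,p,p)$ for some odd $p\geq 5$, by choosing $p$ with suitable prime factorization to tune the sieve defect. If a single family proves insufficient, the parallel computation for $(2,3,c)$ with $\gcd(c,6)=1$, using $\cos(k\pi/2)=0$ and $\cos^2(k\pi/3)=1/4$, gives $\kappa(2,3,c;k) = 3/4-\cos^2(k\pi/c)$ and hence $\adim(2,3,c) = \#\{k\in[1,c/6) : \gcd(k,c)=1\}$, with analogous formulas available for $(3,3,c)$, $(2,q,q)$, and $(2,4,c)$. Section~6 furnishes the base cases $r\leq 15$, so only the tail $r>15$ needs a uniform argument.

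The main obstacle is rigorously proving that the sieve counts above attain every value of $r$: each such formulation reduces the question to finding integers $p$ (or $c$) with prescribed unit-counting behavior in short intervals, a problem comparable in flavor to short prime gaps and hence likely conjectural in a direct approach. The more promising route is an inductive construction: starting from any triple $(a,b,c)\in\calT(r-1)$, produce a related triple (e.g.\ $(a,b,cq)$ or $(a,bq,cq)$ for a carefully chosen prime $q$) belonging to $\calT(r)$, using Theorem~\ref{thm:mult} to track the change in $|H|$ and Lemma~\ref{lem:range} to count the newly hyperbolic Galois conjugates contributed by the enlarged modulus. The hardest step---and almost certainly where a genuinely new idea is required---is guaranteeing that the dimension is raised by \emph{exactly} one rather than by some larger amount: controlling this increment uniformly in $r$ appears to require either a delicate Chebotarev-style argument for the distribution of admissible conjugacy classes or an explicit numerical condition on $q$ that is simultaneously realizable and tight.
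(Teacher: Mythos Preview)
The statement you are attempting to prove is labeled a \emph{Conjecture} in the paper, not a theorem: the authors explicitly do not prove it. Their only remark following the conjecture is that, by Corollary~\ref{cor:adimoooo}, any $r$ of the form $\phi(a)/\gcd(2,a)$ is realized by $(a,\infty,\infty)\in\calT(r)$, which handles a subset of $r$ but certainly not all (these are the ``totient-type'' numbers). So there is no paper proof to compare against.

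Your proposal is not a proof but a research outline, and you are candid about this: you correctly compute $\adim(p,p,p)$ for odd $p$, correctly observe that primes $p\in\{6r\pm 1\}$ yield dimension $r$ but that such primes need not exist (your example $r=20$ is apt), and then acknowledge that closing the remaining gaps ``appears to require either a delicate Chebotarev-style argument \dots\ or an explicit numerical condition.'' That is an honest assessment; the problem you isolate---showing that a sieve count over a short interval $[1,p/3)$ or $[1,c/6)$ attains every positive integer value---is genuinely of the same flavor as problems on prime gaps and values of arithmetic functions that are not currently tractable by elementary means. Your inductive idea of passing from $\calT(r-1)$ to $\calT(r)$ by adjoining a prime factor is natural, but as you note, controlling the increment to be exactly one is the crux, and nothing in Theorem~\ref{thm:mult} or Lemma~\ref{lem:range} gives that control for free.

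In short: your calculations are sound and your instincts about where the difficulty lies are accurate, but what you have written is a plan rather than a proof, and the conjecture remains open in the paper as well.
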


By Corollary \ref{cor:adimoooo}, any integer $r$ of the form $r=\phi(a)/\!\gcd(2,a)$ (a kind of totient number) yields $(a,\infty,\infty) \in \calT(r)$.

\end{document}